\newtheorem{lemma}{Lemma}
\newtheorem{definition}{Definition}
\newtheorem{theorem}{Theorem}
\newtheorem{corollary}{Corollary}
\newtheorem{proposition}{Proposition}
\title{Polyharmonic Functions in the Quarter Plane}
\author{Andreas Nessmann}
\date{}
\begin{document}
\maketitle

\begin{abstract}
\noindent
In this article, a novel method to compute all discrete polyharmonic functions in the quarter plane for models with small steps, zero drift and a finite group is proposed. A similar method is then introduced for continuous polyharmonic functions, and convergence between the discrete and continuous cases is shown. \end{abstract}

\begin{small}
\noindent
This extended abstract was published in the proceedings of the 33\textsuperscript{rd} International Conference on Probabilistic, Combinatorial and Asymptotic Methods for the Analysis of Algorithms (AofA 2022), see \cite{Nessmann}.\end{small}

\section{Introduction and Motivation}
Suppose we are given a weighted step set $\mathcal{S}\subseteq \{-1,0,1\}^2$, and we want to count the (weighted) number $q(0,x;n)$ of excursions in the quarter plane of length $n$ from the origin to some point $x=(i,j)$. For the Simple Walk, for instance, we have $\mathcal{S}=\left\{\uparrow,\rightarrow,\downarrow,\leftarrow\right\}$, where each step has weight $\frac{1}{4}$. In this case, the number $q(0,x;n)$ can be computed explicitly (see e.g. \cite{MBM2}) via \begin{align}\label{eq:SW_exact}
    q(0,x;n)=\frac{(i+1)(j+1)n!(n+2)!}{m!(m+i+1)!(m+j+1)!(m+i+j+2)!},
\end{align}
where $m=\frac{n-i-j}{2}$. It is now fairly natural to ask about asymptotics of this expression, or more generally about asymptotics of the number $q(0,x;n)$ for an arbitrary step set $\mathcal{S}$. In particular, we consider (as proposed in \cite{Poly}) asymptotic expansions of the form \begin{align}\label{eq:asymptotic}
q(0,x;n)\sim \gamma^n\sum_{p\geq 1}\frac{v_p(x)}{n^{\alpha_p}}.
\end{align}
In case of the Simple Walk, (\ref{eq:SW_exact}) allows us to directly compute \begin{align}
    \label{eq:as_SW_1}v_1=&(i+1)(j+1),\\
    v_2=&(i+1)(j+1)(15 + 4 i + 2 i^2 + 4 j + 2 j^2),\\
    \label{eq:as_SW_2}
    \begin{split}v_3=&(i + 1) (j + 1) (317 + 16 i^3 + 4 i^4 + 168 j + 100 j^2 + 16 j^3 \\
    &   +4j^4+ 8 i (21 + 4 j + 2 j^2) + 4 i^2 (25 + 4 j + 2 j^2)).
   \end{split}
\end{align} 
It should be explicitly noted at this point that expansions of the form (\ref{eq:asymptotic}) are not proven to exist for this type of problem. While for the Simple Walk and a few other examples (e.g. the Diagonal Walk, Tandem Walk, see \cite{MBM}) this can be shown using an explicit representation similar as (\ref{eq:SW_exact}), in general it is not so clear (although one-term expansions of this form have been proven for many cases in \cite{Denisov}, and more recently, using multivariate analytic techniques, in \cite[Thm.~1]{Universality},\cite[6.1]{ACSV}).\\
It is now fairly natural to ask about the properties of the $v_p$; whether they necessarily have a particular structure, if there is a clear relation to our chosen step set, and how to compute them. And indeed, at least the first two questions can be answered fairly easily by utilizing a recursive relation between the $q(0,x;n+1)$ and $q(0,x;n)$, and showing that each function $v_p$ must be what is called a discrete polyharmonic function of order $p$.\\
In the continuous case, a function $f$ is called polyharmonic of degree $p$ if it is a solution of \begin{align}\label{eq:defPHF}
    \triangle^p f=0,
\end{align}
where $\triangle$ is a Laplacian operator $\triangle = \frac{1}{2}\left(\sigma_{11}\frac{\partial^2}{\partial x^2}+2\sigma_{12}\frac{\partial ^2}{\partial x\partial y}+\sigma_{22}\frac{\partial^2}{\partial y^2}\right)$. These kinds of functions have already been studied in the late 19th century, notably by E.~Almansi, who proved in \cite{Almansi} that in a star-shaped domain containing the origin, any polyharmonic function of degree $n$ can be written as \begin{align}
    f(x)=\sum_{k=0}^n |x|^{2k}h_k(x),
\end{align} 
where the $h_k$ are harmonic (polyharmonic of degree $1$). In particular harmonic and biharmonic functions have by now seen plenty of applications in physics, see e.g. \cite{Elasticity}.\\
The discrete setting on the other hand has gained interest comparably recently. A (discrete) function defined on a graph is called polyharmonic if it satisfies (\ref{eq:defPHF}) as well, but with a discretised version of the Laplacian. For this discretisation, given transition probabilities $p_{x,y}$ from any point $x$ to any point $y$, one lets \begin{align}
    \triangle f(x)=\sum_{y}p_{x,y}f(y)-f(x).
\end{align}
There have been some results on polyharmonic functions on trees recently \cite{Trees2,Trees1}, and polyharmonic functions on subdomains of $\mathbb{Z}^d$ have become an object of interest linked in particular to the study of discrete random walks.  In our case, this subdomain will be the quarter plane and our walk homogeneous, i.e. the transition probabilities $p_s:=p_{x,x+s}$, where the steps $s$ are given by the set $\mathcal{S}$ of allowed steps, will be independent of $x$. The discrete Laplacian thus reads \begin{align}
    \triangle f(i,j)=\sum_{(u,v)\in\mathcal{S}}p_{u,v}f(i+u,j+v)-f(i,j).
\end{align}
It is not at all obvious, however, how polyharmonic functions in general can be found. In \cite{Conformal}, a way to construct harmonic functions for zero-drift models with small steps via a boundary value problem is given. This is utilized in \cite{Hung} to give a complete discription of harmonic functions for symmetric step sets with small negative steps; the methods used therein can be applied to the case considered here with only minor adjustments. Very recently in \cite{Poly}, the authors propose a way to extend this method from harmonic to polyharmonic functions, provided one can compute a so-called `decoupling function', which was first introduced by W.~T.~Tutte in \cite{Tutte2}, and is discussed further in \cite{Tutte}. There, this concept is utilized to give remarkably succinct proofs of the algebraicity (or D-algebraicity) of the counting function of some models in the quarter plane.\\
Generally, instead of working directly with a polyharmonic function $h(u,v)$, one prefers to consider its generating function $H(x,y):=\sum_{i,j}x^{i+1}y^{j+1}h(i,j)$. The main reason to do so is the functional equation \begin{small}
\begin{align}
    \label{eq:FE} K(x,y)H(x,y)&=K(x,0)H(x,0)+K(0,y)H(0,y)-K(0,0)H(0,0)-xy\left[\triangle H\right](x,y),
\end{align}
\end{small}\noindent
which can be shown by straightforward computation to be satisfied by this generating function (note that we have $\triangle H=0$ for harmonic $H$). Here, $K(x,y)$, which will be defined in Section \ref{sec:Preliminaries}, is the same kernel that usually appears in the study of random walks, and even the resulting functional equations for counting walks or the stationary distribution look strikingly similar, see e.g. \cite{Book,MBM}.\par
This article aims to generalize and complete the notions introduced in \cite{Poly}, and thus give a description of all discrete polyharmonic functions for walks with small steps, zero drift and finite group. The main tool to do so will be the explicit computation of decoupling functions. The structure will be roughly as follows:
\begin{itemize}
\item In Section~\ref{sec:Discrete}, an algorithm to construct discrete polyharmonic functions is presented (Thm.~\ref{thm:buildPHF}). In addition, it is shown that all possible discrete polyharmonic functions can be constructed in this manner (Thm.~\ref{thm:AllPHF1}).
\item In Section \ref{sec:Continuous}, the same construction is done for the Laplace transforms of continuous polyharmonic functions (Thm.~\ref{thm:PHF_C}), again using a functional equation approach presented for biharmonic functions in \cite{Poly}.
\item In Section~\ref{sec:Convergence} the relation between the discrete and continuous cases is briefly discussed;  it is shown that discrete polyharmonic functions converge towards continuous ones in the sense of generating functions/Laplace transforms (Thm.~\ref{thm:Convergence}).
\item Lastly, Section~\ref{sec:Outlook} gives a brief overview of some open questions and ongoing research.
\end{itemize}

\section{The Discrete Case}\label{sec:Discrete}
\subsection{Preliminaries}\label{sec:Preliminaries}
The following only serves as a very brief overview; for a more thorough introduction see e.g. \cite{Conformal, Book}. Consider a homogeneous random walk in $\mathbb{Z}\times\mathbb{Z}$ with a step set $\mathcal{S}$ and transition probabilities $p_{i,j}$. From now on, we will make the following assumptions:
\begin{enumerate}[i]
    \item The walk consists of small steps only, i.e. $\mathcal{S}\subseteq\{-1,0,1\}^2$.
    \item The walk is non-degenerate, that is, the list $p_{1,1},p_{1,0},p_{1,-1},p_{0,-1},p_{-1,-1},p_{-1,0},p_{-1,1},p_{0,1}$ does not contain three consecutive $0$s. 
    \item The walk has zero drift, meaning that $\sum_{(i,j)\in\mathcal{S}}ip_{i,j}=\sum_{(i,j)\in\mathcal{S}}jp_{i,j}=0.$
    \item Any polyharmonic function considered is supposed to have Dirichlet boundary conditions, i.e. it is $0$ outside of the quarter plane. This is due to the probabilistic interpretation of (\ref{eq:asymptotic}); there can be no paths which start outside but always are inside the quarter plane.
\end{enumerate}
A standard object appearing in a variety of functional equations around random walks (besides those below for example when one wants to compute a stationary distribution, or for counting walks, see e.g. \cite{Book}) is the \textit{kernel} of the walk, which is given by \begin{align}
    K(x,y)=xy\left(\sum_{(i,j)\in\mathcal{S}}p_{i,j}x^{-i}y^{-j}-1\right).
\end{align}
In \cite{Book}, this kernel is examined quite thoroughly, and we will in the following state a few of their results.\\
As we consider non-degenerate walks with small steps, our kernel will necessarily be quadratic in both $x$ and $y$. Letting \begin{align}
\label{eq:kernelnot1}
    K(x,y)=a(x)y^2+b(x)y+c(x)= \tilde{a}(y)x^2+\tilde{b}(y)x+\tilde{c}(y),
\end{align}
we can use the quadratic formula to find solutions of $K(\cdot,y)=0$, which are given by \begin{align}
    X_\pm (y)=\frac{-\tilde{b}(y)\pm\sqrt{\tilde{b}(y)^2-4\tilde{a}(y)\tilde{c}(y)}}{2\tilde{a}(y)}.
\end{align}
One can define $Y_\pm$ in the same fashion by swapping $x,y$. Letting $D:=\tilde{b}(y)^2-4\tilde{a}(y)\tilde{c}(y)$, then one can show \cite[2.5]{Conformal}, \cite[2.3.2]{Book} that $D(y)=0$ has $3$ solutions: the double root $y=1$, a solution $y_1\in [-1,1)$, and a solution $y_4\in (1,\infty)\cup (-\infty, -1]$. Consequently, one can see that for $y\in[y_1,1]$, we have $X_+(y)=\overline{X_-(y)}$. This is in particular used in the computation of harmonic functions, as in \cite{Conformal} or \cite{Hung}. The idea is to define the domain $\mathcal{G}$ as the area bounded by the curve $X_\pm\left([y_1,1]\right)$, and notice that the functional equation~(\ref{eq:FE}) leads to the boundary value problem \begin{align}
    K(x,0)H(x,0)-K(\overline{x},0)H(\overline{x},0)=0
\end{align}
on $\partial\mathcal{G}\setminus\{1\}$, while $K(x,0)H(x,0)$ is analytic in the interior of $\mathcal{G}$ and continuous on $\overline{\mathcal{G}}\setminus\{1\}$ (cf \cite{Poly,Conformal}). One can then construct a mapping $\omega: \mathbb{C}\to \bar{\mathbb{C}}$ which is a fundamental solution of the above BVP, in the sense that any other solution can be written as some entire function applied to $\omega$ \cite{Conformal}. This $\omega$ then has the properties \begin{align}\label{eq:wInv}
    \omega(0)&=0,\quad
    \omega(X_+(y))=\omega(X_-(y))\quad\forall y\in[y_1,1],\quad 
   \frac{\partial \omega}{\partial x}(x)\neq 0\quad\forall x\in\mathcal{G}^{\circ}.
\end{align}
In particular, $\omega$ is a conformal mapping of the domain $\mathcal{G}$. Furthermore, it has a pole-like singularity of order $\pi/\theta$ at $x=1$, where $\theta$ is the inner angle at which $\partial\mathcal{G}$ intersects the $x$-axis. Alternatively, $\theta$ can be computed via \begin{align}\label{eq:defTheta}
    \theta=\arccos\left(-\frac{\sum ij p_{i,j}}{\sqrt{\sum i^2 p_{i,j}}\sqrt{\sum j^2 p_{i,j}}}\right),
\end{align} see e.g. \cite[2.15]{Conformal}. This angle is also closely related to the group of a walk, which will be introduced below in Section~\ref{sec:Decoupling}. Additionally, we can see that $\omega\circ X_+$ is a conformal mapping of a region $\mathcal{G}'$ obtained by swapping $x,y$ (by \cite[Cor.~5.3.5]{Book}), and it has the same behaviour around $1$ as $\omega$. Finally, $\omega$ turns out to be an invariant in the sense of \cite[Def.~4.3]{Tutte}.
\subsection{Discrete Polyharmonic Functions}
We will start with a few elementary properties. Denote in the following by $\mathcal{H}_n$ the space of real-valued discrete $n$-polyharmonic functions. Clearly, $\mathcal{H}_n$ is a $\mathbb{R}$-vector space. Now, given any $\widehat{H}_n\in\mathcal{H}_n$, we can identify it with the sequence $\left(\widehat{H}_n,\widehat{H}_{n-1},\dots, \widehat{H}_1\right)$, where $\triangle\widehat{H}_{k+1}=\widehat{H}_k$, and $\triangle\widehat{H}_1=0$. It is clear that any such sequence is uniquely defined by $\widehat{H}_n$. Now suppose that we have $\widehat{H}_n,\widehat{H}_n'\in \mathcal{H}_n$, such that, with their sequence representation as above, $\widehat{H}_1=\widehat{H}_1'$. In this case, we have \begin{align}
    \triangle^{n-1}\left[\widehat{H}_n-\widehat{H}_n'\right]=\widehat{H}_1-\widehat{H}_1'=0,
\end{align}
thus $\widehat{H}_n-\widehat{H}_n'\in\mathcal{H}_{n-1}$. Therefore, provided that for each $\widehat{H}_n\in\mathcal{H}_n$ we can find a corresponding $\widehat{H}_{n+1}\in\mathcal{H}_{n+1}$, which will be shown below in Thms.~\ref{thm:buildPHF} and \ref{thm:Convergence}, by simple induction one can prove the following Lemma:
\begin{lemma}
Let $\mathcal{H}_n$ be the space of real-valued, discrete $n$-polyharmonic functions in the quarter plane, subject to $\widehat{H}_n(0,\cdot)=\widehat{H}_n(\cdot,0)\equiv 0$. Then we have an isomorphy of vector spaces \begin{align}
    \mathcal{H}_n\cong \left(\mathcal{H}_1\right)^n.
\end{align}
\end{lemma}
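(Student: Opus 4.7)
I would argue by induction on $n$, the base case $n=1$ being a tautology. For the inductive step, consider the $\mathbb{R}$-linear map
\begin{align*}
\Phi : \mathcal{H}_n \longrightarrow \mathcal{H}_1, \qquad \widehat{H}_n \longmapsto \triangle^{n-1}\widehat{H}_n,
\end{align*}
whose image is harmonic because $\triangle^n\widehat{H}_n = 0$, and which lands in $\mathcal{H}_1$ by the Dirichlet conventions fixed in the paper. The computation carried out in the paragraph immediately preceding the lemma already identifies $\ker\Phi$ with $\mathcal{H}_{n-1}$: if $\triangle^{n-1}\widehat{H}_n = 0$ then $\widehat{H}_n$ is itself $(n-1)$-polyharmonic on the quarter plane and still satisfies the Dirichlet condition inherited from $\mathcal{H}_n$.

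For surjectivity of $\Phi$, I would appeal to the extension property that the authors defer to Thms.~\ref{thm:buildPHF} and \ref{thm:Convergence}: every element of $\mathcal{H}_k$ arises as the Laplacian of some element of $\mathcal{H}_{k+1}$. Starting from an arbitrary $h\in\mathcal{H}_1$ and iterating the extension $n-1$ times produces a preimage $\widehat{H}_n\in\mathcal{H}_n$ with $\Phi(\widehat{H}_n) = h$.

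The two facts above assemble into the short exact sequence of $\mathbb{R}$-vector spaces
\begin{align*}
0 \longrightarrow \mathcal{H}_{n-1} \hookrightarrow \mathcal{H}_n \xrightarrow{\;\Phi\;} \mathcal{H}_1 \longrightarrow 0,
\end{align*}
which splits as every short exact sequence of vector spaces does; indeed, the iterated extension outlined above furnishes an explicit linear section. Hence $\mathcal{H}_n \cong \mathcal{H}_{n-1}\oplus\mathcal{H}_1$, and combining with the inductive hypothesis $\mathcal{H}_{n-1}\cong(\mathcal{H}_1)^{n-1}$ closes the induction and delivers $\mathcal{H}_n \cong (\mathcal{H}_1)^n$.

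The only non-routine ingredient is the surjectivity of $\Phi$, since the kernel identification is done explicitly in the paragraph above the lemma and the splitting of the resulting short exact sequence is automatic for $\mathbb{R}$-vector spaces. That surjectivity is exactly the existence statement the paper defers to the later construction, so once Thms.~\ref{thm:buildPHF} and \ref{thm:Convergence} are in hand, the present lemma becomes essentially a formal linear-algebra consequence; I do not foresee any additional obstacle beyond verifying that the Dirichlet convention is compatible with every map in sight, which is implicit in the paper's setup.
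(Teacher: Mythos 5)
Your proof is correct and follows essentially the same route as the paper: the paper's own argument identifies $\widehat{H}_n$ with the sequence $(\widehat{H}_n,\dots,\widehat{H}_1)$, shows that two elements of $\mathcal{H}_n$ with the same harmonic bottom term differ by an element of $\mathcal{H}_{n-1}$ (your kernel computation), and defers surjectivity of the extension step to Thms.~\ref{thm:buildPHF} and~\ref{thm:Convergence} exactly as you do. Your packaging via a split short exact sequence is just a cleaner formalization of the same induction.
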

In particular, if we are given any $\widehat{H}_n\in\mathcal{H}_n$, and we want to find all corresponding $\widehat{H}_{n+1}\in\mathcal{H}_{n+1}$, then this means that instead it suffices to find a single $\widehat{H}_{n+1}$ with this property as well as all harmonic functions, because any other such $\widehat{H}_{n+1}'$ can be written as $\widehat{H}_{n+1}+\widehat{G}_1$, for some $\widehat{G}_1\in\mathcal{H}_1$. This property enables us to completely classify discrete harmonic functions.\\
We already know (see e.g. \cite{Conformal},\cite{Poly}, using the idea of the BVP outlined above), that for any polynomial $P(x)\in\mathbb{R}[x]$, we can construct (the GF of) a harmonic function via 
\begin{align}\label{eq:buildHarmonic}
    H(x,y)=\frac{P\left(\omega(x)\right)-P\left(\omega\left(X_+(y)\right)\right)}{K(x,y)},
\end{align}
where $\omega$ is the conformal mapping introduced in Section~\ref{sec:Preliminaries}. We will now show what is, in a sense, the opposite direction of the above statement. The following theorem (as well as its proof) is an analogue to \cite[Thm.~2]{Hung}, where a similar result is shown for the case of symmetric walks with small negative steps. 
\begin{restatable}{theorem}{thmAllHF}\label{thm:AllHF}
For any discrete harmonic function with generating function $H(x,y)$, there is a unique formal power series $P(x)$ such that (\ref{eq:buildHarmonic}) holds. In particular, we have an isomorphism \begin{align}
    \mathcal{H}_1\cong \mathbb{R}[[x]].
\end{align}
\end{restatable}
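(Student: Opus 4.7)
The plan is to invert the construction $P\mapsto H$ given by (\ref{eq:buildHarmonic}). First, given $H\in\mathcal{H}_1$, applying (\ref{eq:FE}) with $\triangle H=0$ yields the decomposition
\begin{align*}
    K(x,y)H(x,y)=A(x)+B(y)-c,
\end{align*}
where $A(x):=K(x,0)H(x,0)\in\mathbb{R}[[x]]$, $B(y):=K(0,y)H(0,y)\in\mathbb{R}[[y]]$, and $c:=K(0,0)H(0,0)$. Substituting the algebraic branch $x=X_+(y)$ of $K=0$ (interpreted formally in an appropriate extension of $\mathbb{R}[[y]]$) forces $B(y)=c-A(X_+(y))$, so that
\begin{align*}
    H(x,y)=\frac{A(x)-A(X_+(y))}{K(x,y)}.
\end{align*}
Thus $H$ is completely determined by the single-variable series $A\in\mathbb{R}[[x]]$.

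Next I would factor $A$ through $\omega$. By (\ref{eq:wInv}) we have $\omega(0)=0$ and $\partial_x\omega\neq 0$ on $\mathcal{G}^{\circ}$, so (taking $0\in\mathcal{G}^{\circ}$, as in the standard setup) $\omega$ has a compositional inverse in $\mathbb{R}[[x]]$. Setting $P(w):=A(\omega^{-1}(w))\in\mathbb{R}[[w]]$ yields $A(x)=P(\omega(x))$, and plugging this into the previous display reproduces (\ref{eq:buildHarmonic}). For uniqueness, observe that (\ref{eq:buildHarmonic}) is insensitive to additive constants in $P$; if $P_1,P_2$ both solve it, comparing numerators shows $(P_1-P_2)(\omega(x))=(P_1-P_2)(\omega(X_+(y)))$, a quantity depending only on $x$ on one side and only on $y$ on the other, hence a constant. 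Normalising $P(0)=0$ (legitimate because constants cancel in (\ref{eq:buildHarmonic})) pins $P$ down uniquely, and combined with the reverse direction produces the claimed linear bijection $\mathcal{H}_1\cong\mathbb{R}[[x]]$.

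The main technical obstacle is giving precise meaning to the substitution $x=X_+(y)$: the branches $X_\pm$ are algebraic functions of $y$ and, under the paper's zero-drift/non-degeneracy hypotheses, the discriminant $D(y)$ typically satisfies $D(0)<0$, so that $X_\pm(y)$ are a priori conjugate elements of $\mathbb{C}[[y]]$ rather than of $\mathbb{R}[[y]]$. One would therefore derive the intermediate identities in a suitable complexified ring and verify that the resulting formula descends to $\mathbb{R}[[x,y]]$, exploiting the conjugation symmetry $X_+=\overline{X_-}$ on the relevant arc. Alternatively — and this is the route followed by \cite[Thm.~2]{Hung} — one phrases the whole argument symmetrically via the invariance $\omega(X_+(y))=\omega(X_-(y))$ from (\ref{eq:wInv}), which is the formal counterpart of the boundary value condition recalled in Section~\ref{sec:Preliminaries}.
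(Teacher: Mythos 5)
Your argument follows the paper's route only for the case $K(0,0)=0$, and it has a genuine gap elsewhere: it silently assumes that substituting $x=X_+(y)$ (and, later, $w=\omega(X_+(y))$) into an arbitrary formal power series is meaningful. Since $K(0,y)=\tilde{c}(y)$ and $X_+(y)X_-(y)=\tilde{c}(y)/\tilde{a}(y)$, the branch $X_+(y)$ has zero constant term only when $K(0,0)=0$ (equivalently $p_{1,1}=0$). When $K(0,0)\neq 0$ --- e.g.\ for the diagonal walk --- one has $X_+(0)\neq 0$ and in general $d_0:=\omega(X_+(0))\neq 0$, so neither $A(X_+(y))$ nor $P(\omega(X_+(y)))$ is defined for arbitrary $A,P\in\mathbb{R}[[x]]$: one would be evaluating a formal series at a point with nonzero constant term. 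The obstacle you single out (the branches $X_\pm$ being complex conjugates) is not the real difficulty; constant terms are. This is exactly why the paper's proof splits into two cases and, for $K(0,0)\neq 0$, replaces the monomials $w^m$ by the tailored polynomials $P_{2m}(w)=w^m(w-d_0)^m$ and $P_{2m+1}(w)=w^{m+1}(w-d_0)^m$, which vanish to order about $m$ at both $0=\omega(0)$ and $d_0=\omega(X_+(0))$; consequently $P_m(\omega(x))$ and $P_m(\omega(X_+))$ have valuations growing with $m$ in $x$ and $y$ respectively, infinite linear combinations converge coefficientwise, and arbitrary boundary data $Q(x)=K(x,0)H(x,0)$ and $R(y)=K(0,y)H(0,y)$ with $Q(0)=R(0)$ can be matched by a triangular system. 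Your step ``set $P:=A\circ\omega^{-1}$'' is fine on the $x$-side, but without a $d_0$-adapted basis the $y$-side of (\ref{eq:buildHarmonic}) does not even parse in the second case, so surjectivity and the isomorphism $\mathcal{H}_1\cong\mathbb{R}[[x]]$ are not established there.

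In the case $K(0,0)=0$ your argument is essentially the paper's (kernel substitution to eliminate $B(y)$, then factoring the boundary term through $\omega$), and invoking the compositional inverse of $\omega$ --- legitimate since $\omega(0)=0$ and $\omega'(0)\neq 0$ --- is a clean alternative to the paper's iterative coefficient extraction via the Weierstra{\ss} preparation theorem. To complete the proof you would need to add the second case along the lines above, or otherwise explain in what sense (\ref{eq:buildHarmonic}) is to be read when $\omega(X_+(0))\neq 0$.
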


\begin{proof}  One constructs explicitly a basis  $\left(H_1^k\right)_{k\in\mathbb{N}}$ via \begin{align}\label{eq:defH1k}
H_1^k(x,y):=\frac{P_k\left(\omega(x)\right)-P_k\left(\omega\left(X_+\right)\right)}{K(x,y)},
\end{align}
where $P_{2k}(x)=x^k(x-d_0)^k, P_{2k+1}=x^{k+1}(x-d_0)^k$ for $d_0=\omega\left(X_+(0)\right).$ See App. \ref{app:thmAllHF}. \end{proof}
If we compare the functional equation (\ref{eq:FE}) for harmonic and polyharmonic functions, then the only difference lies in the additional term of $xyH_n(x,y)$ on the right-hand side not vanishing for the latter. In terms of a BVP, this means that
we now want to solve \begin{small}\begin{align}\label{eq:DecouplingBVP}
    K\left(X_+,0\right)H_n\left(X_+,0\right)-K\left(X_-,0\right)H_n\left(X_-,0\right)=X_+yH_{n-1}(X_+,y)-X_-yH_{n-1}(X_-,y).
\end{align}
\end{small}\noindent
In an ideal world, the right-hand side of the latter equation would be $0$ as in the harmonic case, and this is indeed what happens for the Simple Walk. In this case, we can proceed as before, and obtain an explicit formula for polyharmonic functions.

\subsection{Example: the Simple Walk}\label{sec:SimpleWalk}
The Simple Walk has the step set $\mathcal{S}=\{\uparrow, \rightarrow,\downarrow,\leftarrow\}$, each with probability $\frac{1}{4}$. We have \begin{align}
    K(x,y)=\frac{xy}{4}\left(x+y+x^{-1}+y^{-1}\right)-xy,\quad
    \omega(x)=\frac{-2x}{(1-x)^2},\quad 
    \omega\left(X_+\right)=-\omega(y).
\end{align}
As it turns out that the right-hand side of (\ref{eq:DecouplingBVP}) keeps vanishing, one can iteratively construct polyharmonic functions via $H_{n+1}(x,y):=\frac{xyH_n(x,y)-X_+yH_n(X_+,y)}{K(x,y)}$. This allows us to find an explicit expression for all resulting polyharmonic functions. It appears that this property is directly tied to the fact that $\pi/\theta=2$, where $\theta$ is given by (\ref{eq:defTheta}). 

\begin{restatable}{theorem}{thmSimpleWalk}\label{thm:SimpleWalk}\text{}
\begin{enumerate}
    \item 
The functions defined by \begin{align}
    H_m^k(x,y)=2^{m-1}\omega(X_0)^{m-1}\frac{\omega(x)-\omega\left(X_+\right)}{K(x,y)}\left[\sum_{j=0}^{k-1}s_{m}(j)\omega\left(X_+\right)^j\omega(x)^{k-j-1}\right],
\end{align}
where $s_l: \mathbb{N}\to\mathbb{N}$ is defined inductively via $s_1(j)=1, s_{l+1}(j)=\sum_{i=1}^{j+1}s_l(j)$, are polyharmonic functions with $\triangle H_{m+1}^k=H_m^k$. 
\item Given any formal power series $P(x)=\sum a_nx^n$ and any $m$, the limit \begin{align}
    H_{m}^P(x,y):=\lim_{n\to\infty}\sum_{j=0}^na_jH_m^j
\end{align}
exists, and we again have $\triangle H_{m+1}^P=H_m^P$. In particular, any discrete $m$-polyharmonic function can be written as $H_m^P$ for some $P$.
\end{enumerate}
\end{restatable}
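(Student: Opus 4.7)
The approach centers on a Simple Walk-specific identity that I would verify first by direct polynomial manipulation:
\begin{align*}
    \frac{\omega(x)+\omega(y)}{K(x,y)}=\frac{-8}{(1-x)^2(1-y)^2}.
\end{align*}
Combined with $\omega(X_+(y))=-\omega(y)$ and Vieta's relation $X_+X_-=1$, this identity is what makes the decoupling iteration collapse at every step.

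For Part~1 I would argue by induction on $m$. The base case $m=1$ is immediate: the sum $\sum_{j=0}^{k-1}\omega(X_+)^j\omega(x)^{k-j-1}$ is geometric, the prefactor $\omega(x)-\omega(X_+)$ cancels, and $H_1^k$ collapses to $(\omega(x)^k-\omega(X_+)^k)/K(x,y)$, which is an instance of (\ref{eq:buildHarmonic}) with $P(z)=z^k$ and hence harmonic. The inductive step rests on a decoupling lemma asserting $X_+yH_m^k(X_+,y)=X_-yH_m^k(X_-,y)$: substituting $\omega(X_+)=-\omega(y)$ into the defining formula and applying the displayed identity to cancel the kernel reduces the $x$-dependence to a single factor $1/(1-x)^2$, after which $X_+/(1-X_+)^2=X_-/(1-X_-)^2$ (a direct consequence of $X_+X_-=1$) closes the argument. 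Decoupling guarantees that
\begin{align*}
H_{m+1}(x,y):=\frac{xyH_m^k(x,y)-X_+yH_m^k(X_+,y)}{K(x,y)}
\end{align*}
is a bona fide formal power series, and substituting it into the functional equation (\ref{eq:FE}) forces $\triangle H_{m+1}=H_m^k$. Expanding this numerator in the monomial basis $\omega(X_+)^j\omega(x)^{k-j-1}$ and reading off coefficients then reproduces the stated recursion defining $s_{m+1}$ from $s_m$.

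For Part~2, existence of the limit $H_m^P=\lim_n\sum_{j=0}^na_jH_m^j$ as a formal power series follows from the observation that $H_m^j$ has $(x,y)$-valuation at least $j-1$: after eliminating $K$ via the displayed identity, $H_m^j$ factors as a constant multiple of $1/[(1-x)^2(1-y)^2]$ times a polynomial of degree $j-1$ in $\omega(x),\omega(y)$, each monomial of which contributes $(x,y)$-bidegree at least $j-1$. Hence only finitely many $H_m^j$ contribute at any fixed bidegree, and the partial sums stabilize. Linearity of $\triangle$ and a termwise application of Part~1 then give $\triangle H_{m+1}^P=H_m^P$. For completeness, I would combine the tower isomorphism with Theorem~\ref{thm:AllHF}: given an arbitrary $\widehat{H}\in\mathcal{H}_m$, the harmonic base $\triangle^{m-1}\widehat{H}$ of its tower equals $H_1^P$ for a unique formal power series $P$ by Theorem~\ref{thm:AllHF}, so $\widehat{H}-H_m^P\in\mathcal{H}_{m-1}$ by construction, and one iterates the argument (absorbing the residual lower-order harmonic corrections into the parameter at each stage) to reach the full representation.

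The main technical obstacle is the combinatorial bookkeeping that identifies the $s_m(j)$ recurrence from the explicit expansion of the iteration's numerator in powers of $\omega(x)$ and $\omega(X_+)$; conceptually, however, the whole argument is driven by the decoupling identity above, which traces back to the angle $\theta=\pi/2$ for the Simple Walk and the resulting order-$2$ pole of $\omega$ at $x=1$.
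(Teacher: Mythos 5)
Your proposal is correct and follows essentially the same route as the paper: the identity $xy\,\frac{\omega(x)-\omega(X_+)}{K(x,y)}=2\omega(x)\omega(X_+)$ driving the vanishing of the decoupling term, induction on $m$ with the $s_{m}\mapsto s_{m+1}$ recursion read off from the iterated numerator, convergence of $\sum_j a_jH_m^j$ from the growth of the valuation in $j$, and completeness via the tower argument combined with Theorem~\ref{thm:AllHF}. The only step you defer --- extracting the $s_{m+1}$ recurrence from the expansion --- is handled in the paper by the one-line identity $a\bigl[\sum_{j=0}^{k-1}c_ja^{k-j-1}b^j\bigr]-b^k\sum_{j=0}^{k-1}c_j=(a-b)\sum_{j=0}^{k-1}\bigl(\sum_{i=1}^{j+1}c_i\bigr)a^{k-j-1}b^j$ with $a=\omega(x)$, $b=\omega(X_+)$, which closes that bookkeeping.
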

\begin{proof}
See App.~\ref{app:thmSW}.\end{proof}
Using Thm.~\ref{thm:SimpleWalk}, we can e.g. directly compute 
$H_1^1=-\frac{8}{(1-x)^2(1-y)^2}$, $H_2^1=-\frac{32y}{(x-1)^2(y-1)^4}$, $H_3^1=-\frac{128y^2}{(x-1)^2(y-1)^6}$.\\
Proceeding to compute the generating functions $V_{1,2,3}$ of $v_{1,2,3}$ as given in (\ref{eq:as_SW_1})--(\ref{eq:as_SW_2}), we obtain \begin{small}\begin{align*} 
V_1=64H_1^1,\quad V_2=\frac{3}{8}H_2^1-\frac{3}{8}H_1^2+60H_1^1,\quad 
V_3=-24H_3^1+24H_2^2+72H_2^1-30H_1^3-72H_1^2+5072H_1^1.
\end{align*}
\end{small}\noindent
It is somewhat striking here that the only $p$-polyharmonic part contained in $v_p$ is $H_p^1$, which is in some manner the simplest possible. At this stage there is neither a proof that this is always true nor a counter-example.

\subsection{Decoupling}\label{sec:Decoupling}
While the computation for the Simple Walk turned out to be fairly simple, this was mainly due to the right-hand side of (\ref{eq:DecouplingBVP}) consistently vanishing. This does not happen in general. For the Tandem Walk, for instance, we arrive at 
\begin{align}
    K(X_+,0)H_1^1(X_+,0)-K(X_-,0)H_1^1(X_+,0)=\frac{y^3\sqrt{1-4y}}{(y-1)^5}.
\end{align}
The direct approach using the BVP like in the harmonic case does not generally yield an explicit solution as easily as before. Instead, we choose a more combinatorial approach and utilize what is called a decoupling function in \cite[Def. 4.7]{Tutte}. 
\begin{definition} Let $M(x,y)$ be an expression in $x,y$. If we can find $F(x), G(y)$ such that \begin{align}\label{eq:def:Decoupling}
    F(x)+G(y)\equiv M(x,y)\quad\operatorname{mod} K(x,y),
\end{align}
then we say that $F$ is a \textbf{decoupling function} of $M$.
\end{definition}
These decoupling functions are closely related to the concept of invariants as in \cite[Def.~4.3]{Tutte}. An example of a decoupling function will for instance be given in Section~\ref{sec:exampleTandem}. Let in the following $H'$ be polyharmonic, and $H$ be such that $\triangle H=H'$. By substitution into (\ref{eq:DecouplingBVP}), we directly find that for $F(x)$ a decoupling function of $xyH'(x,y)$ we have \begin{align}\label{eq:def:Decoupling2}
    K(X_+,0)H(X_+,0)-F(X_+)-\left[K(X_-,0)H(X_-,0)-F(X_-)\right]=0.
\end{align}
In other words, if one knows how to compute a decoupling function of $xyH'(x,y)$, then one can again let $K(x,0)H(x,0)-F(x)=P(\omega)$; by the same arguments as for the BVP above one will then eventually arrive at a solution for $H(x,y)$. In \cite[App.~C]{Poly}, a decoupling function is guessed in order to compute a biharmonic function for the Tandem Walk. It turns out, however, that such a decoupling function can be explicitly computed for any model as long as a certain group of the corresponding step set is finite. This group is generated by the mappings \begin{align}
    \Phi: \quad \begin{cases} x&\mapsto x^{-1}\frac{\tilde{c}(y)}{\tilde{a}(y)},\\
    y&\mapsto y,\end{cases}\quad\quad\quad
    \Psi: \quad \begin{cases} x&\mapsto x,\\ y&\mapsto y^{-1}\frac{c(x)}{a(x)}.\end{cases}
\end{align}
One can easily see that $\Phi,\Psi$ are involutions, and depending on the order of $\Theta:=\Phi\circ\Psi$, the group can be either finite or infinite. This group has been of interest in the study of random walks for some time now, see e.g. \cite{MBM,Singer}.
\begin{theorem}[{see \cite[Thm.~4.11]{Tutte}}]
Suppose our step set has a finite group of order $2n$, and $M(x,y)$ is such that \begin{align}\label{eq:SignedOrbit}
    \sum_{\gamma\in\mathcal{G}}\operatorname{sgn}(\gamma)\gamma\left(M(x,y)\right)=0.
\end{align}
Then a decoupling function of $M(x,y)$ is given by \begin{align}\label{eq:DecouplingFormula}
    F(x)=-\frac{1}{n}\sum_{i=1}^{n-1}\theta^i\left[M(x,Y_+)+M(x,Y_-)\right].
\end{align}
\end{theorem}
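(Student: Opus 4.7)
The plan is to verify directly that the proposed $F(x)$ decouples $M(x,y)$ modulo $K(x,y)$, by working on the algebraic curve $\mathcal{C}=\{K(x,y)=0\}$ on which the group $\mathcal{G}$ acts through the involutions $\Phi$ and $\Psi$. First I would reformulate the decoupling condition: since $K$ is quadratic in $y$ with roots $Y_\pm(x)$, the congruence $F(x)+G(y)\equiv M(x,y)\pmod{K(x,y)}$ is equivalent to saying that $M(x,y)-F(x)$, restricted to $\mathcal{C}$, is $\Phi$-invariant, with $G(y)$ then recovered as this common value on each $\Phi$-fiber. Concretely, for every $(x,y)\in\mathcal{C}$ one needs
\[
M(x,y)-M(\Phi(x,y)) \;=\; F(x) - F(\Phi_x(y)),
\]
where $\Phi_x(y) = \tilde c(y)/(\tilde a(y)x)$ denotes the other $x$-root at fixed $y$.

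Next, I would parametrize a generic orbit of $\mathcal{G}$ cyclically as $P_0,P_1,\ldots,P_{2n-1}$ with $P_{2k+1}=\Psi P_{2k}$ and $P_{2k+2}=\Phi P_{2k+1}$ (indices modulo $2n$), so that the two orbit points with common $x$-coordinate $x_k$ are $P_{2k}$ and $P_{2k+1}$, and $\theta=\Phi\circ\Psi$ acts on these $x$-coordinates by the cyclic shift $x_k\mapsto x_{k+1}$. Because $S(x):=M(x,Y_+)+M(x,Y_-)$ is $\Psi$-invariant on the curve, $\theta^j[S]$ evaluated at the orbit equals $\sigma_{k+j}$, where $\sigma_k := M(P_{2k})+M(P_{2k+1})$. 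The formula for $F$ thus reduces on the orbit to $F(x_k) = -\tfrac{1}{n}\sum_{j=1}^{n-1}\sigma_{k+j}$, and an immediate telescoping gives $F(x_{k-1})-F(x_k) = \tfrac{1}{n}(\sigma_{k-1}-\sigma_k)$.

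Finally, the decoupling amounts to verifying, for each $k$, the identity $\tfrac{1}{n}(\sigma_{k-1}-\sigma_k) = M(P_{2k-1})-M(P_{2k})$. This is where the hypothesis $\sum_{\gamma\in\mathcal{G}}\operatorname{sgn}(\gamma)\gamma M=0$ enters: combined with the alternating $\Phi$-$\Psi$ edge structure of the orbit $2n$-gon, it produces an averaging identity relating each $\Phi$-edge difference $M(P_{2k-1})-M(P_{2k})$ to the full cycle of the $\sigma_j$, with the weight $1/n$ arising from the $n$-fold $\theta$-symmetry. The main obstacle is precisely this combinatorial step, because the signed-sum hypothesis looks at first like a single scalar relation per orbit whereas the decoupling demands $n$ edge identities; the full Galois-theoretic argument, which exploits the fact that the signed-sum identity holds as a function of $(x,y)$ rather than at a single point, is carried out in \cite[Thm.~4.11]{Tutte}, and I would reproduce it or refer to it directly. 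Once the edge identities are in hand, collecting them along generic orbits and extending by analyticity yields the decoupling congruence on all of $\mathcal{C}$, completing the proof.
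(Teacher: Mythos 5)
The paper itself offers no proof of this statement: it is imported verbatim from \cite[Thm.~4.11]{Tutte}, so there is no internal argument to compare against. Your framework is nevertheless the right one and matches how that reference sets things up: identifying decoupling with $\Phi$-invariance of $M-F$ on the curve $K=0$, writing a generic orbit as a $2n$-cycle alternating $\Psi$- and $\Phi$-edges, and reducing the claim to the telescoping identities $F(x_{k-1})-F(x_k)=M(P_{2k-1})-M(P_{2k})$ are all correct, as is your computation that the proposed $F$ satisfies $F(x_{k-1})-F(x_k)=\tfrac1n(\sigma_{k-1}-\sigma_k)$ with $\sigma_k=M(P_{2k})+M(P_{2k+1})$.

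However, the proposal does not actually prove the theorem, because the one step carrying all of its content --- deriving, for every $k$, the edge identity $(n-1)\bigl(M(P_{2k-1})-M(P_{2k})\bigr)=M(P_{2k-2})-M(P_{2k+1})$ from the hypothesis $\sum_{j}(-1)^{j}M(P_j)=0$ --- is exactly the step you defer back to \cite{Tutte}. This is a genuine gap and not a routine verification: for $n\ge 3$ these $n$ identities are not linear consequences of the single orbit-sum relation (on an abstract $2n$-cycle one can prescribe values $m_j$ with $\sum_j(-1)^jm_j=0$ that violate them, e.g.\ for $n=3$, $k=1$ the identity $m_0-m_3=2(m_1-m_2)$ is independent of $m_0-m_1+m_2-m_3+m_4-m_5=0$), and moreover the candidate $F$ you analyse depends only on the $\Psi$-symmetrized data $\sigma_k$, whereas the decoupling condition constrains the $\Phi$-antisymmetric differences. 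So whatever closes the gap must exploit that $M$ is a fixed rational function and that the orbit-sum hypothesis holds identically in $(x,y)$, or must work with weighted partial orbit sums as in the cited reference rather than with the $\sigma_k$ alone. Your own observation that ``the signed-sum hypothesis looks like a single scalar relation per orbit whereas the decoupling demands $n$ edge identities'' diagnoses the difficulty correctly, but the argument stops precisely there; as written it is a correct reduction plus a citation for the main step, which is no more than the paper itself provides.
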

In the following, we will show that $xyH_n(x,y)$ will turn out to have an orbit sum of $0$ for any polyharmonic $H_n$. This is in particular independent of whether or not the given model has a vanishing orbit sum as in \cite{Tutte}. 

\begin{corollary}\label{prop:oSum}
Suppose the group of the step set is finite and has a series representation around $(0,0)$. Then any rational function $M(x,y)$ of the form \begin{align}
    M(x,y)=xy\frac{u(x)+v(y)}{K(x,y)}
\end{align}
allows for a decoupling function via (\ref{eq:DecouplingFormula}).
\end{corollary}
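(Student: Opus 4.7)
The plan is to reduce the claim to verifying the signed-orbit-sum hypothesis (\ref{eq:SignedOrbit}) for $M$, since then the preceding theorem produces the decoupling function via (\ref{eq:DecouplingFormula}) directly. I would carry this out in two moves: first, isolate a $\mathcal{G}$-invariant factor of $M$; then use the dihedral structure of $\mathcal{G}$ to pair off the remaining terms.

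The first move rests on the key observation that $xy/K(x,y)$ is itself $\mathcal{G}$-invariant. I plan to check this for $\Phi$ by direct computation: the Vieta relation $x\cdot\Phi(x) = \tilde c(y)/\tilde a(y)$ between the two roots of $K(\cdot,y)$ yields $K(\Phi(x),y) = (\Phi(x)/x)\,K(x,y)$, and this correction factor exactly cancels the change in the numerator of $xy/K$, so $\Phi(xy/K) = xy/K$; the case of $\Psi$ is mirror. Writing $\gamma(x,y) = (\gamma_1(x,y),\gamma_2(x,y))$, this invariance gives
\begin{equation*}
\sum_{\gamma \in \mathcal{G}} \operatorname{sgn}(\gamma)\,\gamma(M) \;=\; \frac{xy}{K}\left[\sum_{\gamma} \operatorname{sgn}(\gamma)\,u(\gamma_1) \;+\; \sum_{\gamma} \operatorname{sgn}(\gamma)\,v(\gamma_2)\right].
\end{equation*}

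In the second move I would show that each bracketed sum vanishes. For the $u$-sum I pair each $\gamma$ with $\Psi\gamma$: because $\Psi$ preserves the first coordinate, $(\Psi\gamma)_1 = \gamma_1$, so the two $u$-values agree while the signatures are opposite, and the pair contributes zero. Since $\Psi \neq e$, left-multiplication by $\Psi$ is a fixed-point-free involution of $\mathcal{G}$, so it partitions $\mathcal{G}$ into $n$ such pairs and kills the sum. The $v$-sum vanishes analogously by pairing with $\Phi$, which preserves the second coordinate.

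The only genuinely delicate point I anticipate is in step one: $K$ itself is not $\mathcal{G}$-invariant, only up to the multiplicative factor $\Phi(x)/x$ (respectively $\Psi(y)/y$), and one must be careful not to conflate $K$-invariance with $(xy/K)$-invariance. The series-representation hypothesis on $\mathcal{G}$ plays no role in the algebraic identity above; it enters only to ensure that the iterates appearing in (\ref{eq:DecouplingFormula}) act as proper formal series, so that the resulting $F(x)$ is genuinely a function of $x$ alone.
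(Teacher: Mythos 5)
Your proof is correct and is essentially the paper's own argument in expanded form: the paper's proof likewise rests on the $\mathcal{G}$-invariance of $xy/K(x,y)$ and then cancels the alternating sum over the numerator, which it phrases as a ``telescopic sum'' and you realize equivalently as a fixed-point-free pairing by left multiplication with $\Psi$ (resp.\ $\Phi$). The Vieta computation and the sign-pairing details you supply are exactly the right way to fill in the paper's two-sentence sketch.
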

\begin{proof}
This follows directly from the fact that the denominator $\frac{1}{xy}K(x,y)$ is invariant under $\mathcal{G}$. Alternating orbit summation over the numerator leads to a telescopic sum.
\end{proof}
If a model has a finite group, then it can be shown that $\pi/\theta\in\mathbb{Q}$ (cf \cite[7.1]{Book}). While the following is only stated for $\pi/\theta\in\mathbb{Z}$, it is possible to extend the statements for an arbitrary finite group. This relaxation of conditions, however, adds a lot more technicalities as the resulting functions are not rational anymore.

\begin{restatable}{theorem}{thmbuildPHF}\label{thm:buildPHF}
Suppose our step set has finite group and $\pi/\theta\in\mathbb{Z}$. Let $H_1^k(x,y)$ be defined by (\ref{eq:defH1k}). We can then define inductively \begin{align}\label{eq:buildPHF}
    H_{n}^k(x,y)=\frac{xyH_{n-1}^k(x,y)-F_{n-1}^k(x)-\left[X_+yH_{n-1}^k(X_+,y)-F_{n-1}^k(X_+,y)\right]}{K(x,y)},
\end{align}
where $F_n^k(x)$ is the decoupling function of $xyH_n^k(x,y)$ defined by (\ref{eq:DecouplingFormula}). Then, $H_n^k(x,y)$ is a rational function in $\mathcal{H}_n$ for all $n,k$, which satisfies $\triangle H_{n+1}^k=H_n^k$ as well as (\ref{eq:SignedOrbit}). For each $n,k$ we can write \begin{align}\label{eq:formPHF}
    H_n^k(x,y)=\frac{p_{n,k}(x,y)}{(1-x)^{\alpha}(1-y)^{\alpha}},
\end{align}
where $p_{n,k}$ is a polynomial and $\alpha=k\cdot\pi/\theta+2(n-1)$. 
\end{restatable}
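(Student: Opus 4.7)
The proof is by induction on $n$, with the base case supplied by Theorem~\ref{thm:AllHF}: $H_1^k$ in (\ref{eq:defH1k}) is harmonic by (\ref{eq:buildHarmonic}); rationality and the form (\ref{eq:formPHF}) with $\alpha=k\pi/\theta$ come from the fact that $\pi/\theta\in\mathbb{Z}$ makes $\omega$ rational with a pole of order $\pi/\theta$ at $x=1$ (and $\omega\circ X_+$ analogously at $y=1$), combined with the order-$2$ zero that $K$ carries at $(1,1)$ because of zero drift. The orbit-sum condition is automatic: harmonicity reduces (\ref{eq:FE}) to $KH_1^k=u(x)+v(y)-c$, so $xy H_1^k=(u+v-c)/(K/(xy))$; since $K/(xy)$ is $\mathcal G$-invariant and any univariate function has signed orbit sum that telescopes to zero (as in the proof of Corollary~\ref{prop:oSum}), so does $xy H_1^k$.

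For the inductive step, assume the conclusions hold for $H_{n-1}^k$. By the induction hypothesis $xy H_{n-1}^k$ satisfies (\ref{eq:SignedOrbit}), so (\ref{eq:DecouplingFormula}) yields a decoupling function $F_{n-1}^k(x)$ with partner $G(y)$ satisfying $F_{n-1}^k(x)+G(y)\equiv xy H_{n-1}^k(x,y)\pmod{K(x,y)}$; evaluating at the two roots $x=X_\pm(y)$ of $K(\cdot,y)=0$ shows
\begin{align*}
G(y) &= X_+(y)\,y\,H_{n-1}^k(X_+(y),y)-F_{n-1}^k(X_+(y))\\
     &= X_-(y)\,y\,H_{n-1}^k(X_-(y),y)-F_{n-1}^k(X_-(y)),
\end{align*}
so the numerator $N(x,y):=xyH_{n-1}^k-F_{n-1}^k(x)-G(y)$ of (\ref{eq:buildPHF}) vanishes along both branches of the kernel curve, whence $K$ divides $N$ and $H_n^k:=N/K$ is rational. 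To establish $\triangle H_n^k=H_{n-1}^k$, I would observe that (\ref{eq:buildPHF}) is exactly what the functional equation (\ref{eq:FE}) yields under the decoupling normalisation $K(x,0)H_n^k(x,0)=F_{n-1}^k(x)$ (valid by (\ref{eq:def:Decoupling2})): specialising (\ref{eq:FE}) at $x=X_+$ expresses $K(0,y)H_n^k(0,y)$ through $F_{n-1}^k(X_+)$ and $X_+yH_{n-1}^k(X_+,y)$, substituting back recovers $N$, and uniqueness of $\triangle H_n^k$ in (\ref{eq:FE}) forces $\triangle H_n^k=H_{n-1}^k$; iterating gives $\triangle^n H_n^k=0$.

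The orbit-sum condition for $xy H_n^k$ then follows formally: since $K/(xy)$ is $\mathcal G$-invariant, the signed orbit sum of $xy H_n^k=N/(K/(xy))$ is $(K/(xy))^{-1}$ times that of $N$, and inside $N$ the piece $xyH_{n-1}^k$ has vanishing orbit sum by the induction hypothesis while the univariate pieces $F_{n-1}^k(x)$ and $G(y)$ telescope to zero as in Corollary~\ref{prop:oSum}. For the form (\ref{eq:formPHF}), each summand of $N$ has poles of order at most $\alpha_{n-1}=k\pi/\theta+2(n-2)$ at $(1,1)$: for $xyH_{n-1}^k$ this is immediate, and for $F_{n-1}^k$ and $G$ one verifies that the Galois-symmetric combination $M(x,Y_+)+M(x,Y_-)$ in (\ref{eq:DecouplingFormula}) is rational in $x,y$ and that $Y_\pm$ together with the iterated action $\Theta^i$ act rationally near $(1,1)$ without creating additional singularities; dividing by $K$ then raises the pole order by $2$, yielding $\alpha_n=k\pi/\theta+2(n-1)$.

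The main obstacle will be this pole-tracking step: (\ref{eq:DecouplingFormula}) interleaves algebraic substitutions $y\mapsto Y_\pm(y)$ with the iterated group element $\Theta^i$, and showing that these compositions neither accumulate extra poles at the corner $(1,1)$ nor introduce new singular loci away from $x=1,y=1$ will require a careful local analysis of $\mathcal G$'s action near that point. By contrast, the polyharmonicity verification, while notationally heavy, is structurally routine once the boundary normalisation has been correctly identified.
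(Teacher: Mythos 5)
Your overall route is the same as the paper's: induction on $n$, with the orbit-sum condition (\ref{eq:SignedOrbit}) propagated via the $\mathcal{G}$-invariance of $\frac{1}{xy}K(x,y)$ and the telescoping of the univariate terms $F_{n-1}^k(x)$ and $G(y)$ (Cor.~\ref{prop:oSum}), divisibility of the numerator by $K$ obtained from its vanishing on both branches $x=X_\pm(y)$, and polyharmonicity read off by substituting into the functional equation (\ref{eq:FE}). Up to the pole analysis, everything matches the paper's argument in both substance and level of detail.

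The one step that is wrong as you state it is the pole-order bookkeeping. You claim that every summand of the numerator $N$, in particular $F_{n-1}^k(x)$ and $G(y)$, has poles of order at most $\alpha_{n-1}=k\pi/\theta+2(n-2)$, and that division by $K$ then raises the order by $2$. Both halves are false, and the paper's Tandem-walk example already refutes the first: there $xyH_1^1$ has a pole of order $3$ at $x=1$, while its decoupling function is $F_1(x)=-\frac{81x^3}{4(1-x)^5}$, of order $5$. The correct accounting is the mirror image of yours: the decoupling step is where the order increases by at most $2$ --- this is what must be extracted from (\ref{eq:DecouplingFormula}) and from the analogous explicit formula for $G(y)$ --- whereas division by $K$ is neutral along the divisors $x=1$ and $y=1$, because for a non-degenerate walk $K(1,y)$ and $K(x,1)$ do not vanish identically, so $1/K$ contributes no pole along those lines (only the point $(1,1)$ and the kernel curve are special, and the latter is cancelled by the numerator). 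Your two errors happen to cancel and produce the right exponent $\alpha_n=\alpha_{n-1}+2$, but the ``careful local analysis'' you defer would, if carried out under your stated bounds, fail at the very first step. That analysis of $\Theta^i$ and $Y_\pm$ near $(1,1)$, which you correctly identify as the technical core, should instead be aimed at showing $\operatorname{ord}_{x=1}F_{n-1}^k\le \operatorname{ord}_{x=1}\bigl(xyH_{n-1}^k\bigr)+2$, using $X_\pm(1)=Y_\pm(1)=1$ to rule out poles away from $1$.
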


\begin{proof} See App.~\ref{app:thmbuildPHF}.\end{proof}
The downside of this construction is that we do not know for sure that for any $k$, the sum $\sum_{n=1}^\infty H_n^k$ converges. This property would be very useful in the proof that we can indeed find all polyharmonic functions. However, utilizing the functional equation (\ref{eq:FE}) and proceeding similarly as in the proof of Thm.~\ref{thm:AllHF}, one can for each induction step find a harmonic function $H_{1,n}^k$ such that $\widehat{H_n^k}:=H_n^k+H_{1,n}^k$ has order at least $\left[\frac{k}{2}\right]$ at $0$. The thusly defined $\widehat{H_n^k}$ therefore satisfy the conditions of the following theorem.

\begin{theorem}\label{thm:AllPHF1}
Let $\left(H_n^k\right)_{n,k\in\mathbb{N}}$ be a family of discrete polyharmonic functions, such that \begin{enumerate}
    \item $H_1^k=H_1^k$ is given by (\ref{eq:defH1k}),
    \item $\triangle H_{n+1}^k=H_n^k$,
    \item For any $n$ and any sequence $(a_n)$, the sum $\sum_{n=1}^\infty a_n H_n^k$
    converges.
\end{enumerate}Then, given any $H_n\in\mathcal{H}_n$, we can find $a_{i,j}, 1\leq i\leq n, j\in\mathbb{N}$, such that \begin{align}
    H_n=\sum_{i=1}^n\sum_{j=1}^\infty a_{i,j}H_i^j.
\end{align}

\end{theorem}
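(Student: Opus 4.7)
The plan is to argue by induction on $n$, with Theorem~\ref{thm:AllHF} providing the base case: any $H_1 \in \mathcal{H}_1$ admits a representation $H_1 = \sum_{j=1}^\infty a_{1,j} H_1^j$ by the isomorphism $\mathcal{H}_1 \cong \mathbb{R}[[x]]$ established there.

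For the inductive step, suppose the claim holds at level $n$ and pick $H_{n+1} \in \mathcal{H}_{n+1}$. Since $\triangle H_{n+1} \in \mathcal{H}_n$, the inductive hypothesis supplies coefficients $(a_{i,j})_{1 \leq i \leq n,\, j \geq 1}$ with
\[
  \triangle H_{n+1} \;=\; \sum_{i=1}^n \sum_{j=1}^\infty a_{i,j}\, H_i^j.
\]
The natural candidate ``antiderivative'' is
\[
  G \;:=\; H_{n+1} \;-\; \sum_{i=1}^n \sum_{j=1}^\infty a_{i,j}\, H_{i+1}^j,
\]
in which each inner sum is well-defined by hypothesis~(3). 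Applying $\triangle$ termwise and invoking $\triangle H_{i+1}^j = H_i^j$ gives $\triangle G = 0$, so $G \in \mathcal{H}_1$. The base case then produces $(a_{1,j})_j$ with $G = \sum_j a_{1,j} H_1^j$, and reshuffling indices yields the desired representation $H_{n+1} = \sum_{i=1}^{n+1} \sum_{j=1}^\infty b_{i,j} H_i^j$.

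The main technical point is legitimizing the termwise application of $\triangle$. Working with generating functions in $\mathbb{R}[[x,y]]$ endowed with the $(x,y)$-adic topology, the construction of $\widehat{H_n^k}$ recalled immediately before the theorem ensures that $\widehat{H_n^k}$ vanishes to order at least $[k/2]$ at the origin; hence the series in condition~(3) are Cauchy and the coefficient of any fixed monomial $x^a y^b$ stabilizes after finitely many terms. In this topology $\triangle$ is continuous---at each lattice point it acts as a finite linear combination of coefficients---so it commutes with the convergent sums. A secondary bookkeeping check is that $\triangle H_{n+1}$ genuinely lies in $\mathcal{H}_n$ with the prescribed Dirichlet boundary conditions; this is transparent from the functional equation formulation, where the boundary values are absorbed into the sections $K(x,0)H(x,0)$ and $K(0,y)H(0,y)$, and is the most delicate bookkeeping step rather than a substantive obstacle.
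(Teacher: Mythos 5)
Your proof is correct and follows essentially the same route as the paper: induction on $n$, using Theorem~\ref{thm:AllHF} as the base case, subtracting the candidate antiderivative $\sum_{i,j} a_{i,j} H_{i+1}^j$ to reduce to a harmonic remainder, and relabeling. Your added justification of the termwise application of $\triangle$ via the $(x,y)$-adic topology is a welcome elaboration of a step the paper leaves implicit (it is covered by hypothesis~(3)), but it does not constitute a different argument.
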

\begin{proof}
By induction. For $n=1$, the statement is nothing but Thm. \ref{thm:AllHF}. Now assume the theorem holds for $n$, and let $H_{n+1}\in\mathcal{H}_{n+1}$. By definition, we must then have $H_n:=\triangle H_{n+1}\in \mathcal{H}_n$, so we can write $H_n=\sum_{i=1}^n\sum_{j=1}^\infty a_{i,j}H_i^j$. By construction, $\overline{H_{n+1}}:=\sum_{i=1}^n\sum_{j=1}^\infty a_{i,j}H_{i+1}^j$ is then a $n+1$-polyharmonic function with $\triangle \overline{H_{n+1}}=H_n$. Thus, $H_{n+1}-\overline{H_{n+1}}\in\mathcal{H}_1$, and an application of Thm.~\ref{thm:AllHF} and relabeling of the coefficients immediately yields the statement.
\end{proof}
As an immediate consequence, we can state that any discrete polyharmonic function can be expressed as a countable sum of our $H_n^k$ as defined by (\ref{eq:buildPHF}). 

\subsection{Example: the Tandem Walk}\label{sec:exampleTandem}
To illustrate the results from Section~\ref{sec:Decoupling}, consider the Tandem Walk, which has the step set $\mathcal{S}=\{\rightarrow, \downarrow, \nwarrow\}$, with weights $\frac{1}{3}$ each. We have
\begin{align}
    K(x,y)=\frac{xy}{3}\left(x^{-1}+y+xy^{-1}\right)-xy,\quad
    \omega(x)=\frac{27x^2}{4(x-1)^3},\quad
    \omega\left(X_+\right)=\frac{-27y}{4(y-1)^3}.
\end{align}
We directly obtain $H_1^1=\frac{\omega(x)-\omega(X_+)}{K(x,y)}=\frac{81(xy-1)}{4(x-1)^3(y-1)^3}$, leading to the harmonic function $h(i,j)=(i+1)(j+1)(i+j+2)$. Using (\ref{eq:DecouplingFormula}), we obtain the decoupling function $F_1(x)=-\frac{81x^3}{4(1-x)^5}$. Note that this decoupling function is not the same one as is given in \cite[App. C]{Poly}, where instead (after scaling) $F_1'=\frac{-81x^3}{4(1-x)^6}$ is given. This goes to show that the choice of a decoupling function is, due to the invariance property in (\ref{eq:wInv}), unique only up to functions of $\omega$; in this particular case we have (up to a multiplicative constant) $F_1'(x)-F_1(x)=\omega(x)^2$.\\
Using $F_1$ in (\ref{eq:buildPHF}) directly gives us the biharmonic function $H_2^1=-\frac{243(xy-1)(x+y+xy(x+y-4))}{(x-1)^5(y-1)^5}$.\\
Once again using (\ref{eq:DecouplingFormula}) gives us the next decoupling function $F_2(x)=\frac{81x^2(x+2)}{4(x-1)^7}$, which we can then use to compute $H_3^1=\frac{p(x,y)}{(x-1)^7(y-1)^7}$, for $p(x,y)$ a somewhat unwieldy polynomial of degree $9$.

\section{The Continuous Case}

We now consider solutions of (\ref{eq:defPHF}) with the usual Laplacian. First, it needs to be made clear in which way the latter corresponds to a given step set.  So instead of a discrete random walk on $\mathbb{Z}_{\geq 0}\times\mathbb{Z}_{\geq 0}$, we can also consider a Brownian motion on $\mathbb{R}^+\times\mathbb{R}^+$. Any such Brownian motion is defined by its covariance matrix $\Sigma = \begin{pmatrix} \sigma_{11} & \sigma_{12}\\ \sigma_{12} & \sigma_{22}\end{pmatrix}$, and its infinitesimal generator is the Laplacian \begin{align}\label{eq:defContLap}
    \triangle=\frac{1}{2}\left(\sigma_{11}\frac{\partial^2}{\partial x^2}+2\sigma_{12}\frac{\partial ^2}{\partial x\partial y}+\sigma_{22}\frac{\partial^2}{\partial y^2}\right),
\end{align}
consequently any polyharmonic function with respect to this Brownian motion satisfies (\ref{eq:defPHF}). It can be shown that this Brownian motion is the scaling limit of any non-degenerate discrete random walk with small steps and zero drift such that $\mathbb{E}X^2=\sigma_{11},
    \mathbb{E}XY=\sigma_{12},
    \mathbb{E}Y^2=\sigma_{22}$ \cite{Limic}.
While it is possible to compute solutions of (\ref{eq:defPHF}) explicitly, for instance using polar coordinates as in \cite{Poly,Griffiths}, one can also follow an approach of \cite[App.~A]{Conformal}. There, instead of a generating function, the authors consider the Laplace transform and obtain the following functional equation:
\begin{align}
    \gamma(x,y)L(h)(x,y)=\frac{1}{2}\left(\sigma_{11}L_1(h)(y)+\sigma_{22}L_2(h)(x)\right)+L\left(\triangle(h)\right)(x,y),
\end{align}
where \begin{small}\begin{align}
    \gamma(x,y)&=\frac{1}{2}\left(\sigma_{11}x^2+2\sigma_{12}xy+\sigma_{22}y^2\right),\quad L(f)(x,y)=\int_0^\infty\int_0^\infty e^{-ux-vy}f(u,v)\mathrm{d}u\mathrm{d}v,\\
    L_1(f)(y)&=\int_0^\infty \frac{\partial f}{\partial x}(0,v)e^{-vy}\mathrm{d}v,\quad\qquad\qquad
    L_2(f)(x)=\int_0^\infty\frac{\partial f}{\partial y}(u,0)e^{-ux}\mathrm{d}u,
\end{align}
\end{small}\noindent
see also \cite[2.2]{Poly}. For any polyharmonic function $h_n$, this yields \begin{align}\label{eq:cont_PHF}
    \gamma(x,y)L(h_n)(x,y)=\frac{1}{2}\left(\sigma_{11}L_1(y)+\sigma_{22}L_2(x)\right)+L(h_{n-1})(x,y),
\end{align}
where $h_{n-1}=\mathcal{L}(h_n)$ (for $n=1$ we let $h_0=0$). This functional equation, which is very similar to (\ref{eq:FE}), can now be utilized in order to compute continuous polyharmonic functions. For the harmonic and biharmonic case this has already been done in \cite[2.2]{Poly}, where it is also mentioned that a similar method should work to compute higher order polyharmonic functions. Such a method shall be presented in the following.

\subsection{Continuous Polyharmonic Functions}\label{sec:Continuous}
The idea used in computing harmonic functions in \cite{Conformal, Poly} is very much the same as in the discrete setting. We can rewrite and obtain as in Section~\ref{sec:Preliminaries}\begin{align}
    \gamma(x,y)=\frac{1}{2}\left(\sigma_{11}x^2+2\sigma_{12}xy+\sigma_{22}y^2\right),\quad
    x_\pm=c_\pm y,\quad
    \widehat{\omega}(x)=\frac{1}{x^{\pi/\theta}},
\end{align}
where $c_\pm=c e^{\pm i\theta}$, and eventually arrive at the following result. \begin{proposition}[{\cite[Th.~2.4]{Poly}}]
For any polynomial $P(x)$, the function \begin{align}
    L(h^P)(x,y):=\frac{P\left(\omega(x)\right)-P\left(\omega(c_+ y)\right)}{\gamma(x,y)}
\end{align}
is the Laplace transform of a harmonic function. 
\end{proposition}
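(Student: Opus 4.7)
The plan is to mirror the discrete construction: extract a boundary value problem from the functional equation, identify a fundamental invariant $\omega$, and verify that polynomials in $\omega$ produce Laplace transforms of harmonic functions. Starting from \eqref{eq:cont_PHF} with $h_{n-1}\equiv 0$, the harmonic case reads $\gamma(x,y)L(h)(x,y)=\tfrac{1}{2}(\sigma_{11}L_1(h)(y)+\sigma_{22}L_2(h)(x))$. Since $\gamma(x,y)=\tfrac{\sigma_{22}}{2}(x-c_+y)(x-c_-y)$, evaluating on the zero set $x=c_\pm y$ (after the analytic continuation used in \cite[App.~A]{Conformal}) produces the BVP $\sigma_{11}L_1(h)(y)+\sigma_{22}L_2(h)(c_\pm y)=0$, and subtracting the two equations gives $L_2(h)(c_+y)=L_2(h)(c_-y)$. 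Equivalently, $L_2(h)$ is invariant under the rotation $x\mapsto e^{-2i\theta}x$, since $c_+/c_-=e^{2i\theta}$.

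Next, I would observe that $\omega(x)=x^{-\pi/\theta}$ enjoys exactly this invariance, because $\omega(e^{-2i\theta}x)=e^{2\pi i}\omega(x)=\omega(x)$ (single-valuedness being automatic when $\pi/\theta\in\mathbb{N}$; otherwise a branch of $\omega$ is fixed and the expression is checked to be consistent). It is therefore admissible to set $\sigma_{22}L_2(h^P)(x)=2P(\omega(x))$; the BVP then forces $\sigma_{11}L_1(h^P)(y)=-2P(\omega(c_+y))$, and substituting back into the functional equation yields precisely
\begin{align*}
L(h^P)(x,y)=\frac{P(\omega(x))-P(\omega(c_+y))}{\gamma(x,y)}.
\end{align*}
To see that this defines a holomorphic function on a suitable product of half-planes, I would check that the numerator vanishes on both lines $x=c_\pm y$: on $x=c_+y$ this is immediate, and on $x=c_-y$ it follows from the invariance $\omega(c_-y)=\omega(c_+y)$, so the apparent singularity of $1/\gamma$ is removable.

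Finally, it needs to be shown that $L(h^P)$ is genuinely the Laplace transform of a function $h^P$, and that this $h^P$ is harmonic. The key identity is $\omega(x)^k=x^{-k\pi/\theta}=\Gamma(k\pi/\theta)^{-1}L[v^{k\pi/\theta-1}](x)$, so the boundary data above are Laplace transforms of explicit (possibly distributional) functions, and the full $L(h^P)$ then inherits the same property; harmonicity of its inverse transform follows at once from the functional equation, which now has no $L(\triangle h^P)$-term on the right-hand side. The main obstacle I foresee is making the analytic continuations rigorous: $L, L_1, L_2$ are defined a priori only for $\mathrm{Re}(x),\mathrm{Re}(y)>0$, whereas the BVP must be read on the complex lines $x=c_\pm y$. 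A secondary technical issue arises when $\pi/\theta\notin\mathbb{N}$, where the branch structure of $\omega$ and the fact that small $k$ produces exponents $k\pi/\theta-1$ for which $v^{k\pi/\theta-1}$ is either distributional or non-integrable must be treated with additional care.
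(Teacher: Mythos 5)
The paper itself offers no proof of this proposition: it is imported verbatim from \cite[Th.~2.4]{Poly}, so there is no in-text argument to compare yours against. Your reconstruction via the functional equation and the boundary value problem on the lines $x=c_\pm y$ is exactly the method the paper gestures at in Section~3.1 and uses for the higher-order case (cf.\ the BVP preceding (\ref{eq:cont_decoupling})), and it is the method of \cite[App.~A]{Conformal}; so in spirit you are on the intended route. Two small corrections: the leading coefficient in the factorisation is $\gamma(x,y)=\tfrac{\sigma_{11}}{2}(x-c_+y)(x-c_-y)$, not $\tfrac{\sigma_{22}}{2}$, and the constant term of $P$ is irrelevant since it cancels in the numerator.

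The genuine weak point is your last paragraph. The functional equation (\ref{eq:cont_PHF}) is derived, by integration by parts, \emph{for a given function} $h$ with suitable decay and boundary behaviour; you cannot invoke it to conclude harmonicity of a function you have not yet exhibited, because the identification of $\sigma_{22}L_2(h^P)$ with $2P(\omega(x))$ presupposes that $h^P$ exists and that this really is the Laplace transform of $\partial h^P/\partial y(\cdot,0)$. As written, ``harmonicity of its inverse transform follows at once from the functional equation'' is circular. The standard way to close the loop (and essentially what \cite{Poly} does) is to go forwards rather than backwards: after the linear change of variables sending the quarter plane to a wedge of opening $\theta$ and $\triangle$ to the standard Laplacian, the functions $r^{k\pi/\theta}\sin(k\pi\varphi/\theta)$ are visibly harmonic, and a direct computation of their Laplace transforms yields $\bigl(\widehat\omega(x)^k-\widehat\omega(c_+y)^k\bigr)/\gamma(x,y)$ up to explicit constants; linearity then gives the statement for any polynomial $P$. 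Your term-by-term inversion of $x^{-k\pi/\theta}$ is a step in this direction (and note $k\pi/\theta-1>0$ for $k\ge1$, so no distributional subtleties actually arise), but it only inverts the boundary data, not the two-variable transform. The analytic continuation of $L$, $L_1$, $L_2$ onto the complex lines $x=c_\pm y$, which you flag, is the other gap that a complete proof must address, particularly when $\theta>\pi/2$.
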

The equivalent for the BVP for continuous polyharmonic functions now reads \begin{align}
    \sigma_{22}L_2(h_n)(c_+y)-\sigma_{22}L_2(h_n)(c_-y)=L(h_{n-1}(c_+y,y))-L(h_{n-1}(c_-y,y)),
\end{align}
where as in Section~\ref{sec:Decoupling} the difference to the harmonic case is the right-hand side not necessarily being equal to $0$. Our goal will now be to construct a decoupling function $f_{n-1}(x)$, such that \begin{align}\label{eq:cont_decoupling}
f_{n-1}(c_+y)-f_{n-1}(c_-y)=L(h_{n-1})(c_+y,y)-L(h_{n-1})(c_-y,y).
\end{align}
The computation of such a decoupling function here turns out to be a lot simpler than in the discrete case, as can be seen in the example of the scaling limit of the Tandem Walk.

\subsection{Example: the Scaling Limit of the Tandem Walk}\label{sec:exampleTandemC}
For the scaling limit of the Tandem Walk, we have \begin{align}
    \gamma(x,y)=\frac{1}{3}\left(x^2-xy+y^2\right),\quad
    c_\pm=\frac{1\pm i\sqrt{3}}{2},\quad
    \widehat{\omega}(x)=\frac{1}{x^3}.
\end{align}
Selecting $L(h_1)(x,y)=\frac{\widehat{\omega}(x)-\widehat{\omega}(c_+y)}{\gamma(x,y)}=\frac{3(x+y)}{x^3y^3}$,
(\ref{eq:cont_decoupling}) takes the form $f_1(c_+y)-f_1(c_-y)=\frac{3i\sqrt{3}}{y^5}$.
Since the right-hand side is homogeneous (which, in fact, is generally true, seeing as both $\widehat{\omega}$ and $\gamma$ are homogeneous), the ansatz $f_1(x)=\frac{\alpha}{x^5}$ is very reasonable. By a quick computation, one obtains that $\alpha$ must be $-1$. Everything works out, we obtain $f_1(x)=\frac{-3}{x^5}$ and a biharmonic function
\begin{small}\begin{align}
    L(h_2)(x,y)=\frac{L(h_1)(x,y)-f_1(x)-\left[L(h_1)(c_+y,y)-f_1(c_+y)\right]}{\gamma(x,y)}=\frac{9(x+y)(x^2+y^2)}{x^5y^5}.
\end{align}
\end{small}

\subsection{Decoupling}\label{sec:Decoupling_C}
In the following, we let (analogously to the discrete case) \begin{align}\label{eq:buildH_C}
    L(h_1^k)(x,y):=\frac{\widehat{\omega}(x)^k-\widehat{\omega}(c_+y)^k}{\gamma(x,y)}.
\end{align}
To compute polyharmonic functions, all we need are decoupling functions, i.e. that we can always proceed as in the example above. Given a (homogeneous) polyharmonic function $L(h)$, this can be seen to directly depend on its degree. If $\deg L(h)$ is divisible by $\pi/\theta$, then our ansatz would not work, for then $c_+^{-\deg L(h)}=c_-^{-\deg L(h)}$. However, this turns out not to matter: were one to continue the above example of the Tandem Walk until the first case where this could be an issue, that is, computing $f_3$, then one would see that we already have $L(h_3)(c_+y,y)=L(h_3)(c_-y,y)$, meaning we can select $f_3=0$. This is not a coincidence and will always happen in those cases. Since this is essential in order to continue the procedure but our proof relies heavily on convergence of discrete polyharmonic functions, it will be stated here and be proven in Section~\ref{sec:Convergence}.

\begin{lemma}\label{lemma:contDecoupling}
The procedure starting from a $L(h_1^k)$ described in Section~\ref{sec:exampleTandemC} always works, meaning that there is a constant $\alpha$ such that we can find a decoupling function of the form  \begin{align}f_n(x)=\frac{\alpha}{x^{\deg L(h_n)}}.
\end{align}
\end{lemma}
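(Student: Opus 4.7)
The plan is to proceed by induction on $n$, using the homogeneity inherited from $\widehat{\omega}$ and $\gamma$ to reduce the decoupling equation (\ref{eq:cont_decoupling}) to a single scalar condition on $\alpha$, and then to dispatch the one degenerate case via the discrete-to-continuous convergence of Theorem~\ref{thm:Convergence}. The base case $n=1$ is immediate from the definition (\ref{eq:buildH_C}). For the inductive step, assume $L(h_1^k),\dots,L(h_{n-1}^k)$ have been constructed along with decoupling functions of the claimed form.

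First I would verify that every $L(h_j^k)$ is a homogeneous rational function of degree $-(k\pi/\theta + 2j)$. This is true for $j=1$ by inspection of (\ref{eq:buildH_C}), and the inductive step preserves homogeneity: the numerator defining $L(h_j^k)$ is built from $L(h_{j-1}^k)$, its value at $(c_+y,y)$, and the decoupling function $f_{j-1}$, all homogeneous of the same degree by hypothesis, and dividing by $\gamma$ lowers the degree by exactly $2$.

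Writing $d := k\pi/\theta + 2(n-1)$, homogeneity then forces
\begin{align*}
L(h_{n-1}^k)(c_+y,y) - L(h_{n-1}^k)(c_-y,y) = \frac{C}{y^d}
\end{align*}
for a single complex constant $C = C_{n-1,k}$. Substituting the ansatz $f_{n-1}(x)=\alpha/x^d$ into (\ref{eq:cont_decoupling}) reduces the decoupling condition to the scalar equation $\alpha\,(c_+^{-d}-c_-^{-d}) = C$. Since $c_\pm = c\,e^{\pm i\theta}$, one computes $c_+^{-d}-c_-^{-d} = -2i\,c^{-d}\sin(d\theta)$, which vanishes precisely when $d$ is an integer multiple of $\pi/\theta$. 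In the non-degenerate case $d \notin (\pi/\theta)\mathbb{Z}$, the equation is uniquely solvable and the inductive step is complete.

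The main obstacle is the degenerate case $d \in (\pi/\theta)\mathbb{Z}$, where one must establish $C = 0$ so that $\alpha = 0$ (equivalently $f_{n-1}\equiv 0$) works. The argument I envisage invokes Theorem~\ref{thm:Convergence}: the discrete polyharmonic functions $H_{n-1}^k$ of Theorem~\ref{thm:buildPHF} converge, under the appropriate scaling, to $L(h_{n-1}^k)$, and by Corollary~\ref{prop:oSum} each level admits a rational decoupling function built from alternating orbit sums of $xyH_n^k$. Passing to the scaling limit, the discrete decoupling relation becomes a decoupling relation for $L(h_{n-1}^k)$, and comparing homogeneous components of degree $-d$ pins down $C$ to equal the leading coefficient of a $c_+^{-d}-c_-^{-d}$--factor, which identically vanishes in the degenerate regime. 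The delicate point --- and the reason this lemma is deferred to Section~\ref{sec:Convergence} --- is to justify commuting the limit with evaluation at $(c_\pm y,y)$ and with the extraction of the homogeneous piece of degree $-d$; this should follow from uniform convergence on compact subsets away from the singular locus $\gamma(x,y)=0$, which is precisely the content of the convergence theorem.
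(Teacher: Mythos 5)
Your proposal is correct and follows essentially the same route as the paper: the decisive step in both is to pass to the scaling limit of the discrete decoupling function $F_{n-1}^k(x)=P_{n-1,k}(x)/(1-x)^{k\pi/\theta+2(n-1)}$ via Lemma~\ref{lemma:limit_Decoupling} and Theorem~\ref{thm:Convergence}, which produces a continuous decoupling function already of the ansatz form $\alpha/x^{d}$. Your explicit reduction to the scalar equation $\alpha\left(c_+^{-d}-c_-^{-d}\right)=C$ and the direct solution in the non-degenerate case $d\notin(\pi/\theta)\mathbb{Z}$ are a useful refinement of the paper's argument, since they isolate the degenerate case (where one must see $C=0$) as the only place the convergence machinery is genuinely needed.
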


Utilizing the above Lemma, it is now easy to prove the continuous analogue of Thm.~\ref{thm:buildPHF}.
\begin{theorem}\label{thm:PHF_C}
Let $L(h_1^k)(x,y)$ be defined by (\ref{eq:buildH_C}). We can then define inductively \begin{align}\label{eq:buildPHF_C}
    L(h_n^k)(x,y)=\frac{L(h_{n-1})(x,y)-f_{n-1}(x)-\left[L(h_{n-1})(c_+y,y)-f_{n-1}(c_+y)\right]}{\gamma(x,y)},
\end{align}
where $f_n(x)$ is a decoupling function as in Section~\ref{sec:exampleTandemC}. Then, $L(h_n^k)(x,y)$ is the Laplace transform of an $n$-harmonic function, such that $\mathcal{L}h_n^k=h_{n-1}^k$. For each $n,k$ we can write \begin{align}
    L(h_n^k)(x,y)=\frac{q_{n,k}(x,y)}{x^\alpha y^\alpha},
\end{align}
for $\alpha=k\pi/\theta+2n$ and  $q_{n,k}(x,y)\in\mathbb{C}\left[x,y,x^{\pi/\theta},y^{\pi/\theta}\right]$. In particular, if $\pi/\theta\in \mathbb{Z}$, then $q_{n,k}(x,y)$ is a polynomial. Furthermore, $q_{n,k}$ is homogeneous of degree $k\pi/\theta-2+4n$.
\end{theorem}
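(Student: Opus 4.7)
The plan is to proceed by induction on $n$, mirroring the structure of Theorem \ref{thm:buildPHF}. The base case $n=1$ reduces to the preceding proposition combined with a direct calculation on $L(h_1^k)(x,y)=(\widehat\omega(x)^k-\widehat\omega(c_+y)^k)/\gamma(x,y)$: after clearing negative exponents, the numerator $c_+^{k\pi/\theta}y^{k\pi/\theta}-x^{k\pi/\theta}$ vanishes at both $x=c_+y$ and $x=c_-y$, the latter because $(c_+/c_-)^{k\pi/\theta}=e^{2\pi i k}=1$. Hence $\gamma(x,y)$ divides the numerator, and the quotient yields a homogeneous $q_{1,k}\in \mathbb{C}[x,y,x^{\pi/\theta},y^{\pi/\theta}]$ of the claimed degree.

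For the inductive step, one first verifies well-definedness: the numerator $N(x,y):=L(h_{n-1}^k)(x,y)-f_{n-1}^k(x)-[L(h_{n-1}^k)(c_+y,y)-f_{n-1}^k(c_+y)]$ appearing in (\ref{eq:buildPHF_C}) must be divisible by $\gamma(x,y)$. Vanishing of $N$ at $x=c_+y$ is immediate from the definition; vanishing at $x=c_-y$ is precisely the content of the decoupling condition (\ref{eq:cont_decoupling}) satisfied by $f_{n-1}^k$, whose existence is guaranteed by Lemma \ref{lemma:contDecoupling}. Since $\gamma$ is a scalar multiple of $(x-c_+y)(x-c_-y)$, the quotient $L(h_n^k)$ is well-defined.

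Next, one identifies $L(h_n^k)$ as the Laplace transform of an $n$-polyharmonic function with $\triangle h_n^k=h_{n-1}^k$. Rewriting the defining relation as $\gamma(x,y)L(h_n^k)(x,y)=L(h_{n-1}^k)(x,y)+[-f_{n-1}^k(x)]+[f_{n-1}^k(c_+y)-L(h_{n-1}^k)(c_+y,y)]$ and comparing with the functional equation (\ref{eq:cont_PHF}) suggests the identifications $\tfrac12\sigma_{22}L_2(h_n^k)(x)=-f_{n-1}^k(x)$ and $\tfrac12\sigma_{11}L_1(h_n^k)(y)=f_{n-1}^k(c_+y)-L(h_{n-1}^k)(c_+y,y)$. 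As in the biharmonic argument of \cite[2.2]{Poly}, inversion of the Laplace transform on the quarter plane with Dirichlet data then produces the desired $h_n^k$, and the source term $L(h_{n-1}^k)$ on the right-hand side matches $L(\triangle h_n^k)$, giving $\triangle h_n^k=h_{n-1}^k$.

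Finally, the form and homogeneity assertions follow by a degree count. By the inductive hypothesis, $L(h_{n-1}^k)$ is homogeneous in $(x,y)$ and $f_{n-1}^k(x)=\alpha/x^d$ is homogeneous of the matching degree by Lemma \ref{lemma:contDecoupling}, so $N(x,y)$ is homogeneous and division by $\gamma$ (homogeneous of degree $2$) shifts $\alpha$ by $+2$ and adjusts the degree of $q_{n,k}$ accordingly, yielding the stated exponents. Membership in $\mathbb{C}[x,y,x^{\pi/\theta},y^{\pi/\theta}]$ is preserved at each step. The main obstacle here is not the inductive bookkeeping but the legitimacy of the decoupling step itself: Lemma \ref{lemma:contDecoupling} must furnish an $f_{n-1}^k$ of the prescribed homogeneous form even in the delicate case where $\deg L(h_{n-1}^k)$ is divisible by $\pi/\theta$, which is the very case the naive ansatz fails to cover. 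Granting this lemma, whose justification is deferred to Section \ref{sec:Convergence} via convergence of discrete polyharmonic functions, the remainder of the proof is routine verification.
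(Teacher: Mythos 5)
Your proof is correct and follows essentially the same route as the paper, whose own argument is simply ``check $n=1$ directly, then induct using Lemma~\ref{lemma:contDecoupling}''; you have filled in exactly the details (divisibility of the numerator by $\gamma$ at $x=c_\pm y$ via the decoupling relation, identification of the boundary terms in (\ref{eq:cont_PHF}), degree bookkeeping) that the paper leaves implicit, and you correctly flag that everything hinges on Lemma~\ref{lemma:contDecoupling}, which the paper likewise defers to Section~\ref{sec:Convergence}. One remark: if you actually carry out the degree count you sketch, you find $\deg q_{n,k}=2\alpha+\deg L(h_n^k)=k\pi/\theta+2n$ (consistent with the Tandem example $L(h_2)=9(x+y)(x^2+y^2)/(x^5y^5)$, where $q_{2,1}=9x^2y^2(x+y)(x^2+y^2)$ has degree $7$), which agrees with the theorem's stated $k\pi/\theta-2+4n$ only at $n=1$ --- so the exponent asserted in the statement should not be taken on faith in the inductive step.
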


\begin{proof}
For $n=1$, the statement can be checked directly. The rest follows inductively using Lemma \ref{lemma:contDecoupling}. 
\end{proof}

\section{Convergence}\label{sec:Convergence}

Having continuous and discrete polyharmonic functions associated with a step set, it is natural to assume there would be some kind of connection between them. And indeed, this turns out to be the case, in a similar fashion as has been shown for harmonic functions of symmetric walks in \cite[Th.~1]{Hung}. Comparing the Laplace transform and the generating functions of some functions $h$ and $g$, we have \begin{align}
    L(h)(x,y)=\int_0^\infty\int_0^\infty e^{-ux-vy}h(u,v)\mathrm{d}u\mathrm{d}v,\quad 
    G(x,y)=\sum_{i=0}^\infty\sum_{j=0}^\infty x^iy^j g(i,j).
\end{align}
It is therefore fairly natural to consider expressions of the form $H\left(e^{-x},e^{-y}\right)$. To transition from the discrete to the continuous setting we will also need some scaling parameter, which eventually leads to us considering limits of the form $\lim_{\mu\to 0}\mu^\alpha H\left(e^{-\mu x},e^{-\mu y}\right)$ for some constant $\alpha$. That this kind of limit is reasonable is further shown by the following relations between expressions used in the discrete and continuous cases respectively.\\
The following Lemma~\ref{lemma:limits} can be proven by straightforward computation.

\begin{lemma}\label{lemma:limits}
Let the discrete and continuous kernels $K$ and $\gamma$ belong to the same step set. We then have \begin{align}
    \label{eq:kernellimit} \lim_{\mu\to 0} \frac{K\left(e^{-\mu x},e^{-\mu y}\right)}{\mu^2}=\gamma(x,y),\quad
\lim_{\mu\to 0} X_\pm\left(e^{-\mu x}\right)=1-c_\pm \mu x +\mathcal{O}(\mu ^2).
\end{align}
\end{lemma}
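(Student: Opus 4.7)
The plan is to prove the two statements separately, both by direct Taylor expansion. The common point is that zero drift together with $\sum p_{i,j}=1$ forces the kernel $K$ to vanish to order $\mu^2$ when its arguments tend to $(1,1)$ along the curve $(e^{-\mu x},e^{-\mu y})$, and the $\mu^2$-coefficient is exactly $\gamma(x,y)$. The second limit is then a one-line consequence of the first, by substituting into the defining equation $K(X_\pm(y),y)=0$.

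For the first limit I would simply substitute and expand:
\begin{align*}
K\bigl(e^{-\mu x},e^{-\mu y}\bigr)=e^{-\mu(x+y)}\left(\sum_{(i,j)\in\mathcal{S}}p_{i,j}\,e^{i\mu x+j\mu y}-1\right).
\end{align*}
Taylor-expanding each exponential as $1+(i\mu x+j\mu y)+\tfrac{1}{2}(i\mu x+j\mu y)^2+\mathcal{O}(\mu^3)$, the constant term cancels because $\sum p_{i,j}=1$, the linear term vanishes by the zero-drift hypothesis $\sum i\,p_{i,j}=\sum j\,p_{i,j}=0$, and the quadratic term is
\begin{align*}
\tfrac{\mu^2}{2}\sum_{(i,j)\in\mathcal{S}}p_{i,j}(ix+jy)^2=\tfrac{\mu^2}{2}\bigl(\sigma_{11}x^2+2\sigma_{12}xy+\sigma_{22}y^2\bigr)=\mu^2\gamma(x,y),
\end{align*}
after recognising the sums as the entries of the covariance matrix. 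Since $e^{-\mu(x+y)}=1+\mathcal{O}(\mu)$, dividing by $\mu^2$ and letting $\mu\to 0$ yields $\gamma(x,y)$.

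For the second limit I would use $K(X_\pm(e^{-\mu x}),e^{-\mu x})=0$ together with the expansion just obtained. Writing $X_\pm(e^{-\mu x})=1-\mu\,a_\pm(x)+\mathcal{O}(\mu^2)$ for some unknown coefficient $a_\pm(x)$ and applying the first part (with $x$ replaced by $a_\pm(x)$ and $y$ replaced by $x$) gives
\begin{align*}
0=K\bigl(X_\pm(e^{-\mu x}),e^{-\mu x}\bigr)=\mu^2\gamma\bigl(a_\pm(x),x\bigr)+\mathcal{O}(\mu^3).
\end{align*}
Hence $\gamma(a_\pm(x),x)=0$, whose two roots in the first variable are precisely $a_\pm(x)=c_\pm x$, proving the claimed asymptotic. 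The one step that requires a little care is matching the labels so that $X_+$ corresponds to $c_+$ and $X_-$ to $c_-$; this is settled by combining $X_+(y)=\overline{X_-(y)}$ on $[y_1,1]$ with $\operatorname{Im}c_+>0$ (since $c_\pm=ce^{\pm i\theta}$ and $\theta\in(0,\pi)$).

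I expect the main obstacle to be purely book-keeping. The one non-trivial point is to justify that $X_\pm(e^{-\mu x})$ admits an expansion in integer powers of $\mu$ around $\mu=0$; this rests on the fact, recalled in Section~\ref{sec:Preliminaries}, that $y=1$ is a double zero of $D(y)=\tilde{b}(y)^2-4\tilde{a}(y)\tilde{c}(y)$, so that $\sqrt{D(e^{-\mu x})}$ is an analytic function of $\mu$ near $0$, and hence so is $X_\pm(e^{-\mu x})$ via the quadratic formula. Once that is noted, a direct Taylor expansion of $\tilde{a},\tilde{b},\tilde{c}$ at $y=1$ gives an alternative, fully explicit derivation that agrees with the one sketched above.
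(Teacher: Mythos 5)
Your proof is correct and is precisely the ``straightforward computation'' the paper alludes to (it gives no further proof of this lemma): a second-order Taylor expansion of the kernel at $(1,1)$, where the constant and linear terms cancel by normalisation and zero drift, followed by reading off $c_\pm$ as the roots of $\gamma(\cdot,x)=0$ via $K(X_\pm(y),y)=0$. Your remarks on the analyticity of $X_\pm(e^{-\mu x})$ in $\mu$ (from the double root of $D$ at $y=1$) and on matching the $\pm$ labels are exactly the right points to flag, and nothing further is needed.
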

Using Lemma \ref{lemma:limits}, the strategy in order to show a general convergence of polyharmonic functions is quite simple: we use the fact that the recursive definitions (\ref{eq:buildPHF}) and (\ref{eq:buildPHF_C}) have the same structure, and take the limit of each term separately. All that remains is to consider decoupling functions. However, using once again Lemma \ref{lemma:limits}, this turns out to be rather straightforward, too.

\begin{lemma}\label{lemma:limit_Decoupling}
Suppose we have a discrete and continuous polyharmonic function $H$ and $L(h)$, and a constant $\alpha$ such that \begin{align}
    \lim_{\mu\to 0}\mu^\alpha H\left(e^{-\mu x},e^{-\mu y}\right)=L(h)(x,y).
\end{align}
Then, for any decoupling function $F(x)$ of $xyH(x,y)$,\begin{align}
    f(x):=\lim_{\mu\to 0} \mu^\alpha F\left(e^{-\mu x}\right)
\end{align}
is a decoupling function of $L(h)(x,y)$. 
\end{lemma}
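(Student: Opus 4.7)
The strategy is to take the defining equation of a discrete decoupling function, substitute $y \mapsto e^{-\mu y}$, multiply by $\mu^\alpha$, and pass to the limit $\mu \to 0$ using Lemma~\ref{lemma:limits}; what falls out is exactly the continuous decoupling identity~(\ref{eq:cont_decoupling}). Concretely, from the defining property $F(x) + G(y) \equiv xyH(x,y) \bmod K(x,y)$ one obtains, by evaluating along the two branches $x = X_\pm(y)$ of the kernel curve and subtracting, the identity
\begin{align*}
F(X_+(y)) - F(X_-(y)) = X_+(y)\,y\,H(X_+(y),y) - X_-(y)\,y\,H(X_-(y),y).
\end{align*}

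Now replace $y$ by $e^{-\mu y}$. By Lemma~\ref{lemma:limits}, $X_\pm(e^{-\mu y}) = 1 - c_\pm \mu y + \mathcal{O}(\mu^2)$, which we rewrite as $X_\pm(e^{-\mu y}) = e^{-\mu(c_\pm y + \mathcal{O}(\mu))}$. Multiplying both sides of the identity by $\mu^\alpha$, the right-hand side becomes
\begin{align*}
\mu^\alpha X_\pm(e^{-\mu y}) e^{-\mu y} H(X_\pm(e^{-\mu y}), e^{-\mu y}),
\end{align*}
and the prefactors $X_\pm(e^{-\mu y})$ and $e^{-\mu y}$ both tend to $1$, while the remaining factor converges to $L(h)(c_\pm y, y)$ by the hypothesis on $H$. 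Analogously, the left-hand side yields $\mu^\alpha F(e^{-\mu(c_\pm y + \mathcal{O}(\mu))}) \to f(c_\pm y)$ by definition of $f$. Passing to the limit $\mu \to 0$ therefore produces
\begin{align*}
f(c_+y) - f(c_-y) = L(h)(c_+y, y) - L(h)(c_-y, y),
\end{align*}
which, comparing with (\ref{eq:cont_decoupling}), is precisely the statement that $f$ is a decoupling function of $L(h)$.

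The main obstacle is the justification of the limits through a \emph{perturbed} argument $x = c_\pm y + \mathcal{O}(\mu)$ rather than $x = c_\pm y$ itself: a priori the hypothesis supplies only pointwise convergence in the two independent variables $(x,y)$. To handle this, one uses that by Thm.~\ref{thm:buildPHF} the relevant $H$ is rational with singularities confined to $x,y = 1$, so $\mu^\alpha H(e^{-\mu x}, e^{-\mu y})$ admits a uniform asymptotic expansion in $\mu$ on compact subsets of $\{\Re x, \Re y > 0\}$; the $\mathcal{O}(\mu)$ perturbation of the first argument then contributes only to lower order and vanishes in the limit. The same reasoning applies to $F$, whose form produced by~(\ref{eq:DecouplingFormula}) likewise places its only singularity at $x=1$, so $\mu^\alpha F(e^{-\mu x})$ admits a corresponding uniform expansion justifying $\mu^\alpha F(e^{-\mu(c_\pm y + \mathcal{O}(\mu))}) \to f(c_\pm y)$.
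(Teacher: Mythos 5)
Your argument is the same as the paper's: the paper's entire proof is ``by taking the corresponding limit of (\ref{eq:def:Decoupling2})'', i.e.\ exactly the passage to the limit in $F(X_+)-F(X_-)=X_+yH(X_+,y)-X_-yH(X_-,y)$ that you carry out, with the prefactor $X_\pm(e^{-\mu y})e^{-\mu y}\to 1$ absorbing the discrepancy between decoupling $xyH$ and decoupling $L(h)$. Your additional justification of the limit through the perturbed argument $X_\pm(e^{-\mu y})=1-c_\pm\mu y+\mathcal{O}(\mu^2)$ via rationality and uniform expansions is a welcome filling-in of a detail the paper leaves implicit, but it does not change the route.
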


\begin{proof}
By taking the corresponding limit of (\ref{eq:def:Decoupling2}).
\end{proof}
We can now formulate and prove the following theorem, which shows convergence between the $H_n^k$ and the $L(h_n^k)$ defined in Sections~\ref{sec:Decoupling} and~\ref{sec:Decoupling_C} respectively. In doing so, we will also prove Lemma~\ref{lemma:contDecoupling}. Since we will be using Thm.~\ref{thm:PHF_C} to do so, which in turn utilizes the former, it is worth taking a moment to make sure that in each induction step in the proof of Thm.~\ref{thm:Convergence} for some fixed $n+1$, we use the statement of Thm.~\ref{thm:PHF_C} for $n$, and prove Lemma \ref{lemma:contDecoupling} for $n+1$. We therefore do not enter any circular reasoning.  

\begin{theorem}\label{thm:Convergence}
Let $\pi/\theta\in\mathbb{Z}$ and $H_n^k,L(h_n^k)$ be defined by (\ref{eq:buildPHF}),~(\ref{eq:buildPHF_C}) respectively. Then \begin{align}
    \lim_{\mu\to 0}\mu^{k\pi/\theta+2n}H\left(e^{-\mu x},e^{-\mu y}\right)=\alpha_{n,k}L\left(h_n^k\right)(x,y)
\end{align}
for some constants $\alpha_{n,k}\neq 0$.
\end{theorem}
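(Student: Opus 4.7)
The plan is to prove Theorem~\ref{thm:Convergence} by induction on $n$, simultaneously establishing Lemma~\ref{lemma:contDecoupling} at level $n+1$ inside the inductive step for $n+1$; as noted in the preamble to the theorem, this avoids circularity because Thm.~\ref{thm:PHF_C} at level $n$ suffices to construct the continuous recursion at level $n+1$.

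For the base case $n=1$, the explicit formulas (\ref{eq:defH1k}) and (\ref{eq:buildH_C}) reduce the limit to a direct computation. The key input is the order-$\pi/\theta$ pole of $\omega$ at $x=1$, which gives $\omega(e^{-\mu x}) \sim C\mu^{-\pi/\theta}\widehat{\omega}(x)$ as $\mu\to 0$ for a nonzero constant $C$. Combining this with the asymptotics $X_+(e^{-\mu y}) = 1 - c_+\mu y + O(\mu^2)$ and $K(e^{-\mu x}, e^{-\mu y})/\mu^2 \to \gamma(x,y)$ from Lemma~\ref{lemma:limits}, the rescaled quotient $\mu^{k\pi/\theta + 2}H_1^k(e^{-\mu x}, e^{-\mu y})$ converges to $C^k L(h_1^k)(x,y)$, giving $\alpha_{1,k}=C^k\neq 0$.

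For the inductive step, suppose the convergence holds at level $n$ with constant $\alpha_{n,k}$. I would apply the rescaling to each term on the right-hand side of (\ref{eq:buildPHF}) separately. The term $xyH_n^k$ rescaled by $\mu^{k\pi/\theta+2n}$ converges to $\alpha_{n,k} L(h_n^k)(x,y)$, using $e^{-\mu x}e^{-\mu y}\to 1$; the term $X_+ y H_n^k(X_+, y)$ converges to $\alpha_{n,k} L(h_n^k)(c_+y, y)$ by Lemma~\ref{lemma:limits}; and since a discrete decoupling function $F_n^k$ exists by Thm.~\ref{thm:buildPHF}, Lemma~\ref{lemma:limit_Decoupling} produces a continuous decoupling function $f_n^k(x):=\lim_{\mu\to 0}\mu^{k\pi/\theta+2n} F_n^k(e^{-\mu x})/\alpha_{n,k}$ of $L(h_n^k)$, and analogously for $F_n^k(X_+)$. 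Dividing by the denominator limit $K/\mu^2\to\gamma$ then matches the recursion (\ref{eq:buildPHF_C}), yielding $\mu^{k\pi/\theta+2(n+1)}H_{n+1}^k(e^{-\mu x}, e^{-\mu y}) \to \alpha_{n,k}L(h_{n+1}^k)(x,y)$, and we set $\alpha_{n+1,k}:=\alpha_{n,k}\neq 0$.

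The main obstacle is to verify Lemma~\ref{lemma:contDecoupling} inside this step: one must confirm that the limiting $f_n^k$ has the prescribed monomial form $\alpha/x^{\deg L(h_n^k)}$. Existence of the limit follows from the orbit formula (\ref{eq:DecouplingFormula}) together with the rescaling of $Y_\pm$ obtained by swapping the roles of $x,y$ in Lemma~\ref{lemma:limits}. The monomial form then follows from homogeneity of $L(h_n^k)$ guaranteed by Thm.~\ref{thm:PHF_C}: the right-hand side of (\ref{eq:cont_decoupling}) is homogeneous in $y$ of a fixed negative degree, and since any function of $\widehat{\omega}(x)=x^{-\pi/\theta}$ is kernel-invariant and may therefore be discarded, the only remaining homogeneous candidate is the claimed monomial. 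Non-vanishing of $\alpha_{n,k}$ propagates automatically through the inductive relation $\alpha_{n+1,k}=\alpha_{n,k}$.
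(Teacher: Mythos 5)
Your proposal is correct and follows essentially the same route as the paper: induction on $n$, termwise limits of the matching recursions (\ref{eq:buildPHF}) and (\ref{eq:buildPHF_C}) via Lemma~\ref{lemma:limits}, and a simultaneous proof of Lemma~\ref{lemma:contDecoupling} through Lemma~\ref{lemma:limit_Decoupling}, with the constants propagating as $\alpha_{n+1,k}=\alpha_{n,k}$. The one point where you diverge is the justification of the monomial form of the limiting decoupling function: the paper reads it off directly from the explicit shape $F_n^k(x)=P_{n,k}(x)/(1-x)^{2n+k\pi/\theta}$ obtained in the proof of Thm.~\ref{thm:buildPHF}, whose rescaled limit is automatically $P_{n,k}(1)/x^{2n+k\pi/\theta}$; your homogeneity argument reaches the same conclusion, though it is cleanest when applied to the limit $f_n^k$ itself (which is homogeneous of degree $-(k\pi/\theta+2n)$ by a change of variables in $\mu$) rather than to the space of candidate decoupling functions, since that is what also forces the right-hand side of (\ref{eq:cont_decoupling}) to vanish in the degenerate case where $\deg L(h_n^k)$ is a multiple of $\pi/\theta$.
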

\begin{proof}[Proof of Thm.~\ref{thm:Convergence} and Lemma \ref{lemma:contDecoupling}]
For $n=1$ the statement can be checked by direct computation. Now let the statement be true for some $n$. By Lemma \ref{lemma:limit_Decoupling}, we know that we can define a decoupling function of $L(h_n^k)(x,y)$ via $f_n^k(x)=\frac{1}{\alpha_{n,k}} \lim_{\mu\to 0}\mu^{k\pi/\theta+2n}F_n^k\left(e^{-\mu x}\right)$. From the proof of Thm.~\ref{thm:buildPHF}, we know that $F_n^k(x)$ has the form $P_{n,k}(x)/(1-x)^{2n+k\pi/\theta}$ for some polynomial $P_{n,k}(x)$. Therefore, the ansatz used in Ex. \ref{sec:exampleTandemC} must yield a solution, so Lemma \ref{lemma:contDecoupling} is proven. Having now completely proven Thm.~\ref{thm:PHF_C} for $n+1$, we can simply take piecewise limits of (\ref{eq:buildPHF}), (\ref{eq:buildPHF_C}), and obtain the statement.
\end{proof}

\section{Outlook/Open Questions}\label{sec:Outlook}
While Thm.~\ref{thm:buildPHF} is formulated only for the case $\pi/\theta\in\mathbb{N}$ here, the construction of polyharmonic functions works in essentially the same manner for $\pi/\theta\in\mathbb{Q}$ (albeit the process becomes a bit more technical as we need to work with algebraic instead of rational functions). This will be addressed in a future paper, together with more complete proofs of the theorems above.\begin{itemize}
    \item Seeing as for any finite group we have $\pi/\theta\in\mathbb{Q}$ \cite[7.1]{Book} (though the reverse does not hold), it follows that we can construct arbitrary polyharmonic functions for any walk with drift $0$, small steps and finite group. This directly leads to the question of what to do in the infinite group case. There are some examples where one can show by a direct ansatz that a decoupling function of reasonably nice shape cannot exist, and it can indeed be conjectured that a decoupling function can be computed if and only if the group of the walk is finite. Should this hold, then of course the next question would be how discrete polyharmonic functions could be computed in the infinite group case.
    \item Another open question regards the positivity of harmonic functions. For drift $0$ walks, we know \cite{MartinBoundary} that there is a unique (up to multiples) positive harmonic function. In all known examples this harmonic function is given by $H_1^1$ as in (\ref{eq:defH1k}), and for the symmetric case it was already conjectured in \cite[Conj.~1]{Hung} that this might be true in general. To the author's knowledge there has not yet been a general result in that direction.
    \item While the above gives a description of all discrete polyharmonic function, it is still not clear which ones appear in expansions of the form (\ref{eq:asymptotic}). If the conjecture about the positive harmonic function were to be true, then the harmonic function $v_1$ would have to be a multiple of $H_1^1$. One could then proceed to ask whether this turns out to be true in a more general setting, i.e. if the $p$-harmonic part of $v_p$ as given by (\ref{eq:asymptotic}) is always a multiple of $H_p^1$, as is the case in example \ref{sec:SimpleWalk}.
    \item In all known (at least to the author) enumeration problems of lattice paths with small steps and zero drift in the quarter plane that we can compute higher-order asymptotics of, an expansion as in (\ref{eq:asymptotic}) exists. This leads to the question of whether there are models for which there is no such expansions, or if there are conditions under which the existence of the latter can be shown in general.
\end{itemize}

\section*{Acknowledgements}
I would like to thank Kilian Raschel for introducing me to this topic as well as for a lot of valuable input and many fruitful discussions. Also, I would like to thank the anonymous reviewers for their valuable remarks.

\bibliographystyle{abbrv}

\begin{thebibliography}{10}
\bibitem[Alm99]{Almansi}
Emilio Almansi.
\newblock Sull'integrazione dell'equazione differenziale $\triangle 2n=0$.
\newblock {\em Annali di Matematica Pura ed Applicata}, 2:1--51, 1899.

\bibitem[BBMR21]{Tutte}
Olivier Bernardi, Mireille Bousquet-M\'elou, and Kilian Raschel.
\newblock Counting quadrant walks via {T}utte's invariant method.
\newblock {\em Combinatorial Theory (to appear)}, 2021.

\bibitem[BM02]{MBM2}
Mireille Bousquet-M\'elou.
\newblock Counting walks in the quarter plane.
\newblock {\em Mathematics and Computer Science: Algorithms, trees,
  combinatorics and probabilities, Trends in Mathematics}, pages 49--67, 2002.

\bibitem[BMM10]{MBM}
Mireille Bousquet-M\'elou and Marni Mishna.
\newblock Walks with small steps in the quarter plane.
\newblock {\em Contemp. Math.}, 520:1--40, 2010.

\bibitem[BMS15]{MartinBoundary}
Aymen Bouaziz, Sami Mustapha, and Mohamed Sifi.
\newblock Discrete harmonic functions on an orthant in zd.
\newblock {\em Electronic Communications in Probability, Institute of
  Mathematical Statistics (IMS), 10.1214/ECP.v20-4249. hal-01214538}, 20:1--13,
  2015.

\bibitem[CCGS02]{Trees2}
Joel Cohen, Flavia Colonna, Kohur GowriSankaran, and David Singman.
\newblock Polyharmonic functions on trees.
\newblock {\em American Journal of Mathematics, 124, 999-1043}, 124:999--1043,
  2002.

\bibitem[CFR20]{Poly}
Fran{\c c}ois Chapon, \'Eric Fusy, and Kilian Raschel.
\newblock {Polyharmonic functions and random processes in cones}.
\newblock In {\em DMTCS Proceedings of AofA no. 9}, pages 1--19, 2020.

\bibitem[CMMR17]{Universality}
Julien Courtiel, Stephen Melczer, Marni Mishna, and Kilian Raschel.
\newblock Weighted lattice walks and universality classes.
\newblock {\em Journal of Combinatorial Theory, Series A}, 152, 2017.

\bibitem[DW15]{Denisov}
Denis Denisov and Vitali Wachtel.
\newblock Random walks in cones.
\newblock {\em Annals of Probability}, 43(3):992--1044, 2015.

\bibitem[FIM17]{Book}
Guy Fayolle, Roudolf Iasnogorodski, and Vadim Malyshev.
\newblock {\em Random Walks in the Quarter Plane}.
\newblock Springer International Publishing AG, 2017.

\bibitem[Gri13]{Griffiths}
David~J. Griffiths.
\newblock {\em Introduction to Electrodynamics 4th ed.}
\newblock Cambridge University Press, 2013.

\bibitem[HRT20]{Hung}
Viet~Hung Hoang, Kilian Raschel, and Pierre Tarrago.
\newblock Constructing discrete harmonic functions in wedges.
\newblock {\em Preprint, arXiv:2012.08947v1}, 2020.

\bibitem[LL12]{Limic}
Gregory~F. Lawler and Vlada Limic.
\newblock {\em Random Walk: A Modern Introduction}.
\newblock Cambridge University Press, 2012.

\bibitem[LV95]{Elasticity}
Sergey~A. Lurie and Valery~V. Vasiliev.
\newblock {\em The Biharmonic Problem in the Theory of Elasticity}.
\newblock Taylor \& Francis Ltd, 1995.

\bibitem[Mel21]{ACSV}
Stephen Melczer.
\newblock {\em An Invitation to Analytic Combinatorics}.
\newblock Springer International Publishing AG, 2021.

\bibitem[Nes22]{Nessmann}
Andreas Nessmann.
\newblock {Polyharmonic Functions in the Quarter Plane}.
\newblock In Mark~Daniel Ward, editor, {\em 33rd International Conference on
  Probabilistic, Combinatorial and Asymptotic Methods for the Analysis of
  Algorithms (AofA 2022)}, volume 225 of {\em Leibniz International Proceedings
  in Informatics (LIPIcs)}, pages 15:1--15:16, Dagstuhl, Germany, 2022. Schloss
  Dagstuhl -- Leibniz-Zentrum f{\"u}r Informatik.

\bibitem[PW19]{Trees1}
Massimo~A. Picardello and Wolfgang Woess.
\newblock Boundary representations of $\lambda$-harmonic and polyharmonic
  functions on trees.
\newblock {\em Potential Analysis}, 51:541--561, 2019.

\bibitem[Ras14]{Conformal}
Kilian Raschel.
\newblock Random walks in the quarter plane, discrete harmonic functions and
  conformal mappings.
\newblock {\em Stochastic Processes and their Applications 124, 3147-3178},
  124:3147--3178, 2014.

\bibitem[SH21]{Singer}
Michael Singer and Charlotte Hardouin.
\newblock On differentially algebraic generating series for walks in the
  quarter plane.
\newblock {\em Selecta Mathematica}, 27, 2021.

\bibitem[Tut95]{Tutte2}
William~T. Tutte.
\newblock Chromatic sums revisited.
\newblock {\em Aequationes Math.}, 50(1-2):95--134, 1995.

\end{thebibliography}

\medskip

\noindent Andreas Nessmann, \href{mailto:andreas.nessmann@tuwien.ac.at}{\texttt{andreas.nessmann@tuwien.ac.at}} 

\noindent Institut für diskrete Mathematik und Geometrie, Technische Universität Wien\\
Institut Denis Poisson, Université de tours

\newpage
\appendix

\section{Proof of Thm.~{\ref{thm:AllHF}}}\label{app:thmAllHF}

\thmAllHF*

\begin{proof}[Outline.] The arguments are mostly the same as in \cite[Thm.~2]{Hung}. From (\ref{eq:FE}), it follows that $K(x,y)H(x,y)$ is already uniquely defined by the (univariate) boundary terms $K(x,0)H(x,0)$ and $K(0,y)H(0,y)$. It suffices to consider two cases:
\begin{enumerate}
    \item $K(0,0)=0$:\\
    In this case, we have $X_+(0)=0$, we can therefore substitute $X_+$ into a power series. Doing so in (\ref{eq:FE}) gives \begin{align}
        0=K(X_+,0)H(X_+,0)+K(0,y)H(0,y).
    \end{align}
    Utilizing this to substitute for $K(0,y)H(0,y)$ in (\ref{eq:FE}), we obtain \begin{align}
        K(x,y)H(x,y)=\underbrace{K(x,0)H(x,0)}_{=:P(x)}-\underbrace{K(X_+,0)H(X_+,0)}_{=:P(X_+)}.
    \end{align}
    Setting \begin{align}\label{eq:defH1_1}
        H_1^m(x,y)=\frac{\omega(x)^m-\omega(X_+)^m}{K(x,y)},
    \end{align}
    and utilizing that around $0$ we have (after scaling and potentially switching $x,y$) $\omega(x)=\frac{x(1+p(x))}{(1-x)^{\pi/\theta}}$ (see \cite[5.3]{Book}; use that our walk is not singular), we can iteratively compute coefficients $a_k$ such that $\sum a_j\omega(x)^k=P(x)$. To see that at the end we indeed obtain a power series, one can apply the Weierstra{\ss} preparation theorem. 
    \item $K(0,0)\neq 0$:\\
    In this case, the previous approach does not work anymore since substitution of $X_+$ into an arbitrary power series fails. Instead, let now $\omega(x)=\sum x^nc^n,\omega\left(X_+\right)=\sum y^n d^n$. We know that $c_1,d_1\neq 0, c_0=0$ (see \cite[5.3]{Book}, and notice that $p_{-1,-1}\neq 0$).\\
    We can now proceed by defining \begin{align}
        P_{2m}(z)&=z^m(z-d_0)^m,\\
        P_{2m+1}(z)&=z^{m+1}(z-d_0)^m.
    \end{align}
    Letting \begin{align}\label{eq:defH1_2}
        H_1^m(x,y):=\frac{P_m\left(\omega(x)\right)-P_m\left(\omega(X_+)\right)}{K(x,y)},
    \end{align}
    one can check that the monomial with non-zero coefficient with minimal degree in the series representation of $H_1^m(x,y)$ around $0$ occurs for $k=l=m$ for $m$ even, and $k=l+1=m$ otherwise. Note here that $\omega(x),\omega(X_0)$ have non-vanishing derivative at $0$ as $0\in\mathcal{G}^\circ$, see \cite[5.3]{Poly}.  From there, given arbitrary power series $Q(x),R(y)$ with $Q(0)=R(0)$, one can again iteratively build coefficients $a_n$ such that $\sum a_nP_n(\omega(x))=Q(x)$, $\sum b_nP_n\left(\omega(X_+)\right)=R(y)$. We have thus constructed a harmonic functions with boundary terms $Q(x),R(y)$; since these were arbitrary we are done. Note that, since $K(0,0)\neq 0$, $1/K(x,y)$ can be written as a power series around $0$.

\end{enumerate}

\end{proof}
\newpage
\section{Proof of Thm.~{\ref{thm:SimpleWalk}}}\label{app:thmSW}
\thmSimpleWalk*
\begin{proof}
For the first part, we use induction over $m$. For $m=1$, the statement is nothing but (\ref{eq:buildHarmonic}). Now, assume the statement holds for $1,\dots, m-1$, and pick an arbitrary $k$. Due to the invariance property (\ref{eq:wInv}), and using that \begin{align}
    xy\frac{\omega(x)-\omega(X_+)}{K(x,y)}=\frac{-8xy}{(1-x)^2(1-y)^2}=2\omega(x)\omega\left(X_+\right),
\end{align}
we see that $X_+yH_1^k(X_+,y)=X_-yH_1^k(X_-,y)$, since we can write $X_+yH_1^k(X_+,y)$ as a polynomial in $\omega,~ \omega(X_+)$. Using (\ref{eq:FE}), we can deduce that a $m$-harmonic function with $\triangle H_m^k=H_{m-1}^k$ is given by \begin{align}
    H_m^k(x,y)=\frac{xyH_{m-1}^k(x,y)-X_+yH_{m-1}^k\left(X_+,y\right)}{K(x,y)}.
\end{align}
After a short computation one obtains 
\begin{multline}
    xyH_m^k(x,y)-X_+yH_m^k\left(X_+,y\right)\\
    =2^{m}\omega\left(X_+\right)^{m}\left[\omega(x)\sum_{j=0}^{k-1}s_{m}(j)\omega\left(X_+\right)^j\omega(x)^{k-1-j}-\omega\left(X_+\right)^k\sum_{j=0}^{k-1}s_{m}(j)\right].
    \end{multline}
Using the algebraic identity \begin{align}
    a\left[\sum_{j=0}^{k-1}c_j a^{k-j-1}b^j\right]-b^n\sum_{j=0}^{k-1}c_j=(a-b)\sum_{j=0}^{k-1}\left(\sum_{i=1}^{j+1}c_i\right)a^{k-j-1}b^j
\end{align}
for $a=\omega(x)$ and $b=\omega\left(X_+\right)$ then yields the statement.\\
For the second part, to see the existence of the limit it suffices to notice that the minimal degree of any non-zero coefficient is at least $m$ (note that $\omega(0)=\omega\left(X_+(0)\right)=0$). In the same fashion we can take the limit on both sides of (\ref{eq:FE}) and see that both sides converge to the same power series. \\
To see that we can in this manner indeed produce all possible polyharmonic functions we proceed as in the proof of Thm.~\ref{thm:AllPHF1}.
\end{proof}
\newpage
\section{Proof of Thm.~{\ref{thm:buildPHF}}}\label{app:thmbuildPHF}
\thmbuildPHF*

\begin{proof}[Outline.] 
Consider first the case $n=1$. $H_1^k(x,y)$ being rational follows immediately from $\pi/\theta\in\mathbb{Z}$, and thus $\omega$ being rational (see \cite[(3.12)]{Conformal}). As by construction the numerator $N_1^k(x,y)$ of $xyH_1^k(x,y)$ as defined in (\ref{eq:buildPHF}) satisfies $N_1^k(X_\pm)=0$, it must be a multiple of $K(x,y)$, thus the only poles of $H_1^k$ can be those coming from $\omega(x),\omega(X_+)$. Since $X_+=1$ only if $y=1$, the statement follows from $(\ref{eq:DecouplingFormula})$. The existence of a decoupling function follows immediately from Prop. \ref{prop:oSum}, and since $X_\pm(1)=Y_\pm(1)=1$, we can conclude that $F_1^k(x)$ has its only pole at $1$. We have thus shown the theorem for $n=1$, except for the computation of $\alpha$ which will be done at the end.\\
Now let $n\geq 2$ and assume the theorem holds for $n-1$. We first want to show that $F_n^k(x)$ (as defined by (\ref{eq:DecouplingFormula}))is a decoupling function. We check the orbit sum criterion (\ref{eq:SignedOrbit}). Using (\ref{eq:buildPHF}), we can utilize that by induction hypothesis we already know that (\ref{eq:SignedOrbit}) is satisfied for $xyH_{n-1}^k(x,y)$; dividing by $\frac{1}{xy}K(x,y)$ does not change this, nor does substituting $X_+$ for $x$ in the numerator. Therefore, it remains to show that $xy\left(F_{n-1}^k(x)-F_{n-1}^k(X_+)\right)/K(x,y)$ admits a decoupling function, but this is an immediate consequence of Cor.~\ref{prop:oSum}. In the same manner as for $n=1$, we conclude that $F_n^k(x)$ has its only pole at $x=1$. For the term $X_+yH_{n+1}^k(X_+,y)-F_{n-1}^k(X_+,y)$, note that it is nothing but the $G(y)$ in (\ref{eq:def:Decoupling}), for which an explicit formula similar to (\ref{eq:DecouplingFormula}) is given in \cite[Th.~4.11]{Tutte}. One can easily see that the arguments for $F_n^k(x)$ can be repeated directly for this $G(y)$, and thus this expression too can have its only pole at $y=1$.\\
$H_n^k(x,y)$ having the form given by (\ref{eq:formPHF}) (for now with any $\alpha$) again follows from the fact that its numerator vanishes at $x=X_\pm$, and thus must contain a factor $K(x,y)$. Lastly, to check that $\triangle H_{n+1}=H_n$ it suffices to substitute into the functional equation (\ref{eq:FE}).\\
It remains to show that the order of the poles at $x,y=1$ is at most $k\cdot \pi/\theta + 2(n-1)$. For $n=1$ this can again be verified directly; afterwards it follows from induction: by a computation one can see that the order of the pole of $F(x)$ compared to the one at $x=1$ of $xyH(x,y)$ increases at most by $2$, and by a similar argument for the $G(y)$ in (\ref{eq:def:Decoupling}) (see \cite[Th.~4.11]{Tutte} for an explicit formula) one can show the same for $X_+yH(X_+,y)-F(X_+)=G(y)$. Using (\ref{eq:buildPHF}) finally yields the statement.
\end{proof}

\end{document}